\newtheorem{thm}{Theorem}
\newtheorem{prop}{Proposition}[section]
\newtheorem{lem}[thm]{Lemma}
\newtheorem{conj}[thm]{Conjecture}
\newtheorem{claim}[prop]{Claim}
\newcommand{\A}{\mathcal{A}}
\newcommand{\C}{\mathcal{C}}
\newcommand{\E}{\mathcal{E}}
\newcommand{\G}{\mathcal{G}}
\newcommand{\Hy}{\mathcal{H}}
\newcommand{\I}{\mathcal{I}}
\newcommand{\Sym}{\mathcal{S}}
\newcommand{\bP}{\mathbb{P}}
\newcommand{\bE}{\mathbb{E}}
\newcommand{\bI}{\mathbb{I}}
\newcommand{\bN}{\mathbb{N}}
\newcommand{\bfa}{\mathbf{a}}
\newcommand{\bfb}{\mathbf{b}}
\newcommand{\bfx}{\mathbf{x}}
\newcommand{\bfy}{\mathbf{y}}
\newcommand{\bfzero}{\mathbf{0}}
\newcommand{\Supp}{\textrm{Supp}}
\newcommand{\Var}{\textrm{Var}}
\newcommand{\Cov}{\textrm{Cov}}
\newcommand{\X}{\mathcal{X}}
\newcommand{\mfX}{\mathfrak{X}}
\newcommand{\ep}{\varepsilon}
\newcommand{\Z}{\mathbb{Z}}
\title{Zero sums in restricted sequences}
\author{Niranjan Balachandran\footnote{Dept.of Mathematics, 
IIT Bombay, Mumbai. email: niranj (at) math.iitb.ac.in. 
Supported by grant 12IRCCSG016, IRCC, IIT Bombay}\ \ 
and Eshita Mazumdar\footnote{Center for Combinatorics, 
Nankai University, Tianjin, China. email: eshitamazumdar@yahoo.com.
Supported by NSFC with grant no. 11681217} }
\begin{document}
\maketitle
\begin{abstract} Suppose $A\subset \Z_n\setminus\{0\}$. 
A sequence $\bfx=(x_1,\ldots,x_m)$ of elements of $\Z_n$ is called an  \textit{$A$-weighted Davenport Z-sequence} if there 
exists $\bfa:=(a_1,\ldots,a_m)\in (A\cup\{0\})^m\setminus\bfzero_m$ such that $\sum_i a_ix_i=0$, where $\bfzero_m=(0,\ldots,0)\in\Z_n^m$. Similarly, the sequence 
$\bfx$ is called an \textit{$A$-weighted Erd\H{o}s Z-sequence} if there exists $\bfa=(a_1,\ldots,a_m)\in (A\cup\{0\})^m$ with $|\Supp(\bfa)|=n$, such that $\sum_i a_ix_i=0$, where $\Supp(\bfa):=\{i: a_i\ne 0\}$.  A $\Z_n$-sequence $\bfx$ is called  $k$-restricted if no element of $\Z_n$ appears more than $k$ times in $\bfx$.
In this paper, we study the problem of determining the least value of $m$ for which a $k$-restricted $\Z_n$-sequence of length $m$ is an $A$-weighted Davenport Z-sequence (resp. an $A$-weighted Erd\H{o}s Z-sequence). We also consider the same problem for random $\Z_n$-sequences and some very natural choices of the set $A$.
 \end{abstract}

\textbf{Keywords:} Davenport Constant, Erd\H{o}s constant, Zero-Sum problems.

2010 AMS Classification Code:  11B50, 11B75, 11P70, 11K99.

\section{Introduction}
 In this paper, $k,\ell,m,n$ shall always refer to positive integers. By $[n]$ we shall mean the set $\{1,\ldots,n\}$, and for integers $a<b$, $[a,b]$ shall denote the set $\{a,a+1,\ldots,b\}$. By $\Z_n$ we shall denote the cyclic group of order $n$.  
 
 Throughout this paper, we shall use the Landau asymptotic notation: For functions $f,g$, we write $f(n)=O(g(n))$ if there exists an absolute constant $C>0$ and an integer $n_0$ such that for all $n\ge n_0, |f(n)|\le C|g(n)|$. We write $f=\Omega(g)$ if $g=O(f)$, and we write $f=\Theta(g)$ if $f=O(g)$ and $f=\Omega(g)$. We also write $f=o(g)$ if $\displaystyle\lim_{n\to\infty}\frac{f(n)}{g(n)}=0$. For a real $a>1$, we write $\log_a n$ to denote the logarithm of $n$ to the base $a$.

 For a finite abelian group $(G, +)$ a {\it $G$-sequence of length $m$} shall refer to a sequence $\bfx:=(x_1,\ldots,x_m)$ with $x_i\in G$ for all $i$. If some $x\in G$  appears $r$ times in $\bfx$ then we say that $x$ has  {\it multiplicity} $r$ in $\bfx$. In particular, if $x$ does not appear in $\bfx$ then $x$ has multiplicity zero in $\bfx$. For a sequence $\bfx=(x_1,\ldots,x_m)$, and for a subset 
 $I\subseteq [m]$ of the set of indices, we shall denote by  $\bfx_I$ the sum $\displaystyle\sum_{i\in I} x_i$.  If $I=\emptyset$ then this corresponds to the empty sum.  For  sequences $\bfx=(x_1,\ldots,x_m), \bfy=(y_1,\ldots,y_m)$ of  the same length, we shall denote by $\langle \bfx,\bfy\rangle$ the sum $\displaystyle\sum_{i\in [m]} x_iy_i$.  If $\bfa=(a_1,\ldots,a_m)$ and  $\bfx=(x_1,\ldots,x_m)$ are $\Z_n$-sequences, then  $\bfa\cdot\bfx$ shall denote the sequence $(a_1x_1,\ldots, a_mx_m)$, where the multiplication is from the ring structure of $\Z_n$. For $\bfa\in\Z_n^m$, we define $\Supp(\bfa):=\{i: a_i\ne 0\}$. Finally, by $\bfzero_m$ we shall mean the zero sequence $(0,\ldots,0)\in(\Z_n)^m$.
 
For $A\subseteq\Z_n\setminus\{0\}$, a sequence $\bfx=(x_1,\ldots,x_m)$ of elements of $\Z_n$ is called an  \textit{$A$-weighted Davenport Z-sequence} if there exists $\bfa=(a_1,\ldots,a_m)\in (A\cup\{0\})^m\setminus\bfzero_m$ such that $\langle\bfa,\bfx\rangle=0$. In words, there is a choice of `coefficients' from $A$, not all zero, such that the corresponding `linear combination' equals zero. Similarly, the sequence $\bfx$ is called an \textit{$A$-weighted Erd\H{o}s Z-sequence} if there exists $\bfa:=(a_1,\ldots,a_m)\in (A\cup\{0\})^m$ with $|\Supp(\bfa)|=n$, such that $\sum_i a_ix_i=0$. When $A=\{a\}$ for some $a$ co-prime to $n$, we shall refer to such a sequence simply as a Davenport Z-sequence (resp. an Erd\H{o}s Z-sequence). When the set $A$ is clear from the context, we shall drop any mention of the set $A$ and refer simply to  weighted Davenport Z-sequences (resp. weighted Erd\H{o}s Z-sequences). 
 
 The notion of a weighted Davenport Z-sequence draws its motivation from a well-studied combinatorial invariant associated with a finite abelian group $G$, namely the {\it Davenport constant of $G$} (denoted $D(G)$), which is defined as the least positive integer $m$ such that every $G$-sequence of length $m$ admits a non-trivial finite subsequence whose sum is zero in $G$.  A generalization of this notion in \cite{Adhetal, AC} \footnote{While the generalization was for all finite abelian groups, we shall restrict our attention to the cyclic group $\Z_n$.}   introduces a weighted version of this combinatorial invariant as follows: For a given $A\subseteq\Z_n\setminus\{0\}$,  {\it the weighted Davenport constant of $\Z_n$ with respect to weight set $A$} (denoted by  $D_A(\Z_n)$) is the least integer $m$ such that for every $\Z_n$-sequence $\bfx=(x_1,\ldots,x_m)$ of length $m$, there exists $\bfa\in (A\cup\{0\})^m\setminus\bfzero_m$ satisfying  $\langle\bfa,\bfx\rangle=0$.  
  
 The following results are well known:
 \begin{itemize}
 \item If $a\in\Z_n^*$ and $A=\{a\}$ then any sequence of length $n$ is an $A$-weighted Davenport Z-sequence. The bound $n$ is sharp, as is witnessed by the sequence $(\underbrace{1,\ldots,1}_{n-1 \text{ times}})$. This is folklore and is a simple exercise on the application of the Pigeonhole Principle.
 \item  (See \cite{Adhetal}) For the set $A=\{1,-1\}$, any $\Z_n$-sequence of length 
 $\lfloor\log_2 n\rfloor +1$ is a weighted Davenport Z-sequence, 
 and this result is again, best possible: The sequence 
 $(1,2,\ldots,2^{k-1})$, for $k=\lfloor\log_2 n\rfloor$, is not a weighted Davenport Z-sequence. 
 \item (See \cite{Gri}, \cite{Luca})  If $n = q_1\cdots q_a$ is the product of $a$ primes (not necessarily distinct) and $A=\Z_n^*$, the group of units of the ring $\Z_n$  then any sequence of length $a+1$ is an $A$-weighted Davenport Z-sequence, and again, this result is best possible: The sequence $(1, q_1, q_1q_2,\ldots, q_1q_2\cdots q_{a-1})$ is not a weighted Davenport Z-sequence.
 \end{itemize}
 
 The notion of an Erd\H{o}s Z-sequence draws its motivation from the following non-trivial theorem of Erd\H{o}s-Ginzburg-Ziv \cite{EGZ}: Every $\Z_n$-sequence of length $2n-1$ 
 admits a subsequence of size $n$ whose sum equals  zero. Thus, in our language,  every $\Z_n$-sequence of length $2n-1$ is an Erd\H{o}s Z-sequence, and again, this is best possible since 
 the sequence $(\underbrace{0,\ldots,0}_{n-1 \text{ times}}, \underbrace{1,\ldots,1}_{n-1 \text{ times}})$ is not an Erd\H{o}s Z-sequence. For an arbitrary set $A\subseteq \Z_n\setminus\{0\}$, one can analogously define the parameter 
 $E_A(\Z_n)$  as the least integer $m$ such that for every $\Z_n$-sequence $\bfx=(x_1,\ldots,x_m)$, there exists  $\bfa=(a_1,\ldots,a_m)\in (A\cup\{0\})^m$ with $|\Supp(\bfa)|=n$ such that $\langle\bfa,\bfx\rangle=0$. It turns out  \cite{YuanZeng} that $E_A(\Z_n)=D_A(\Z_n)+n-1$. In particular, it follows that for $A=\{1,-1\}$, every sequence of length $n+\lfloor\log_2 n\rfloor$ is an $A$-weighted Erd\H{o}s Z-sequence and this result is best possible.
 
 One distinct feature of the aforementioned results pertaining to Erd\H{o}s Z-sequences (and some others) is that the exact values of the invariants $D_A(\Z_n)$ are witnessed by highly structured sequences. For instance, for $A=\{a\}$ (for any $a\in\Z_n^*$), the maximal sequences $\bfx$ that are not Davenport Z-sequences are necessarily of the form $\bfx=(\underbrace{x,\ldots,x}_{n-1 \text{ times}})$  for some $x\in\Z_n^*$. However, if we restrict our attention to $\Z_n$-sequences with a bound on the number of incidences of any particular element of $\Z_n\setminus\{0\}$, then it is conceivable that among this restricted class of sequences, the minimum value of $m$ for which every restricted $\Z_n$-sequence (restricted in this sense, which we shall make more precise soon) of length $m$ is an $A$-weighted Davenport (resp. Erd\H{o}s) Z-sequence, might be considerably smaller. And this is the focal point of this paper: How large must a {\it restricted} sequence be if it is to be a Davenport (resp. Erd\H{o}s) Z-sequence?  
 
To make precise what we mean by the word restricted, we define the following set of  $\Z_n$-sequences:
$$\X_k(n,m):=\left\{\bfx\in(\Z_n)^m: \textrm{every }x\in\Z_n\textrm{\ has multiplicity at most }k\textrm{\ in }\bfx\right\}.$$
We shall resort to some abuse of notation and denote $\X_k(n,m)$ by $\X_k(m)$ for simplicity.  The preceding discussion leads us to the following natural problem:
 
 \textbf{Problem:} \label{prob1} Let $A\subset\Z_n\setminus\{0\}$ and suppose $1\le k\le n-1$ is a positive integer.  Determine the least integer $m$ such that every $\bfx\in\X_k(m)$  is an $A$-weighted Davenport (Erd\H{o}s) Z-sequence.
 
 This problem is part of a larger umbrella of problems that usually go by the name of  Inverse Zero-Sum problems (see \cite{GGS}). Indeed, some results in \cite{GGS} are similar to some of the ones in this paper. However, the focus of the results is not the same as ours. 

We set up some further notation. For $k\in\bN$, by $\mathsf s^{{(k)}}(\Z_n)$, we shall mean the least integer $m$ such that every $\bfx\in\X_k(m)$ is an Erd\H{o}s Z-sequence. The case $k=1$ is usually awarded greater status and is referred to as the {\it Harborth constant} of $\Z_n$ (see \cite{MORS} for more results on the Harborth constant) and we shall not delve into that in this paper.

 Our first result of this paper addresses the aforementioned problem posed above for $k\ge 2$:

\begin{thm}\label{X_k(m)}
  Suppose $n-1\ge k\ge 2$ is an integer.  
  \begin{enumerate}
  \item  $\mathsf s^{{(2)}}(\Z_n)= n+ 2$ for $n\ge 5$.
  \item  For any prime $p\ge 5$, $\mathsf s^{(k)}(\Z_p)\leq p+k$. Furthermore, for each $k$, there exists an integer $p_0(k)$ such that
 $\mathsf s^{(k)}(\Z_p) = p+k \text { for all primes } p\geq p_0(k).$
  \item There exist constants $c,C>0$ such that every sequence $\bfx\in\X_k(C\sqrt{nk})$ is a Davenport Z-sequence. Furthermore, there exist sequences $\bfy\in\X_k(c\sqrt{nk})$ that are not Davenport Z-sequences.   \end{enumerate}\end{thm}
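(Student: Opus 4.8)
The plan is to establish matching bounds $\Omega(\sqrt{nk})$ and $O(\sqrt{nk})$. Throughout I use that, since part (3) concerns Davenport Z-sequences (i.e.\ $A=\{a\}$ with $\gcd(a,n)=1$), dividing by $a$ shows that $\bfx$ is a Davenport Z-sequence precisely when some nonempty $I\subseteq[m]$ satisfies $\sum_{i\in I}x_i=0$ in $\Z_n$; so I am simply asking for a nonempty zero-sum subsequence, and I may freely pass to integer representatives in $[1,n-1]$ and ask for a subsequence whose integer sum is a multiple of $n$.

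For the lower bound I would exhibit an explicit $k$-restricted sequence with no nonempty zero-sum subsequence. Take
$$\bfy=(1^k,2^k,\ldots,t^k),\qquad t=\Big\lfloor\sqrt{2n/k}\,\Big\rfloor-1,$$
so that every value lies in $[1,n-1]$, is repeated exactly $k$ times (hence $\bfy\in\X_k$), and the total integer sum equals $k\,t(t+1)/2<n$. Since each term is a positive integer and the entire sequence sums to less than $n$, every nonempty subsequence sum lies in $[1,n-1]$ and is therefore nonzero in $\Z_n$; thus $\bfy$ is not a Davenport Z-sequence. As the length of $\bfy$ is $tk=\Theta(\sqrt{nk})$, this produces the required $\bfy\in\X_k(c\sqrt{nk})$.

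For the upper bound I would first sort the sequence so that its integer representatives satisfy $y_1\le\cdots\le y_m$. The $k$-restriction forces at least $\lceil i/k\rceil$ distinct positive integers among $y_1,\ldots,y_i$, so $y_i\ge i/k$ and hence the total sum satisfies $\sum_{i=1}^m y_i\ge m^2/(2k)=C^2n/2\ge n$ once $m=C\sqrt{nk}$ with $C\ge\sqrt2$. Next I would reduce the range of the values: among the $m$ terms either at least $m/2$ lie in $[1,n/2]$, or at least $m/2$ lie in $(n/2,n-1]$, and in the latter case I replace each such $x_j$ by $n-x_j\in[1,n/2)$, using that for an index set $J$ one has $\sum_{j\in J}x_j\equiv0 \iff \sum_{j\in J}(n-x_j)\equiv0$. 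Either way I obtain a $k$-restricted sequence $z_1,\ldots,z_p$ with $p\ge m/2$, all $z_i\in[1,n/2]$ and $\sum z_i\ge C^2n/8\ge n$, for which a zero-sum subsequence lifts back to one for $\bfx$. The core step is then to show that a $k$-restricted sequence in $[1,n/2]$ of length $\Theta(\sqrt{nk})$ has a subsequence whose integer sum is an exact multiple of $n$. Here I would run a completeness/gap-filling argument: letting $v_1<\cdots<v_r$ (with $r\ge p/k$) be the distinct values and $g$ the largest consecutive gap (with $v_0:=0$), an induction of Cauchy–Davenport type shows the subset sums of the distinct values are $g$-dense in $[0,\sum v_j]$, where the short average gap $g\lesssim n/(2r)=O(\sqrt{nk}/C)$ follows from packing $r$ distinct values into $[1,n/2]$; multiplicities are then used to convert this density into an exact hit of some multiple $jn\le\sum z_i$.

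The hard part, and the place where I expect the real work to lie, is upgrading $g$-density to landing \emph{exactly} on a multiple of $n$, especially for composite $n$. When all values share a common divisor $d>1$ (for instance, a whole coset of a subgroup), every subset sum is a multiple of $d$, so the target multiple of $n$ must itself be divisible by $d$; this both forces one to aim at a farther multiple $jn$ (absorbing a larger constant $C$) and to prove completeness \emph{within} the arithmetic progression $d\Z$ rather than in $\Z$. I would handle this by a $\gcd$-reduction: dividing out the common divisor reduces to a strictly smaller instance (a smaller modulus together with values and multiplicities rescaled), allowing an induction on the divisor structure of $n$, while the chain lemma supplies completeness in the coprime case. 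Controlling the maximal gap $g$ (as opposed to the average gap) and making the final exact-hit bookkeeping uniform in $k$ across the ``many distinct values, small $k$'' and ``few distinct values, large multiplicity'' regimes is where the constant $C$ is pinned down and is the main technical obstacle; for $n$ prime this collapses to the cleaner Cauchy–Davenport growth already relevant to part (2).
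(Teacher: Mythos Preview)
Your proposal addresses only part (3); parts (1) and (2) are not touched. The lower-bound construction $(1^k,\ldots,t^k)$ with total integer sum below $n$ is exactly the paper's example.

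For the upper bound you take a different route from the paper, and there is a real gap at precisely the point you flag. The ``Cauchy--Davenport type'' induction you invoke for $g$-density of subset sums is not substantiated: what one gets for free is that the subset sums of distinct values $v_1<\cdots<v_r\le n/2$ have consecutive gaps at most $v_r$, not the maximal spacing $g=\max_i(v_i-v_{i-1})$, and $v_r\le n/2$ is far too coarse to force an exact hit on a multiple of $n$. Your plan to use the multiplicities to convert near-hits into exact hits is not a mechanism either, since the finest available increment is $v_1$, which can itself be of order $\sqrt{nk}$. The gcd-reduction is orthogonal: even when $\gcd(v_1,\ldots,v_r,n)=1$, nothing in the outline lands a subset sum exactly on a multiple of $n$. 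In effect the ``core step'' you isolate is, already for $k=1$, the full content of Szemer\'edi's resolution of the Erd\H{o}s--Heilbronn conjecture, and you have supplied no substitute for it.

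The paper does not attempt an integer-density argument. It cites two results of Szemer\'edi as black boxes: (i) any $A\subset G$ with $|A|\ge C\sqrt{|G|}$ has $0\in\Sym(A)$, giving the base case $k=1$; and (ii) either $0\in\Sym(A)$ or $|\Sym(A)|\ge\ep_0|A|^2$. It then arranges the $k$-restricted sequence as an array whose $k$ columns are \emph{sets} $R_1,\ldots,R_k$ of distinct elements, applies (ii) to each column to obtain $|B_i|\ge\ep_0|R_i|^2$ where $B_i=\Sym(R_i)\cup\{0\}$, and invokes Scherk's theorem to conclude $|B_1+\cdots+B_k|\ge\min\bigl(n,\sum_i|B_i|-k+1\bigr)>n$ unless some nontrivial $b_1+\cdots+b_k=0$ already occurs. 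An induction on $k$ handles the boundary cases where a column is too short. The quadratic lower bound (ii) is exactly the ingredient your density-to-exact-hit step is missing.
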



We now turn to the second objective of this paper, namely, the study of how these zero-sum related invariants behave if the $\Z_n$-sequence is picked {\it randomly}. More precisely, suppose $\mfX_m=(X_1,\ldots,X_m)$ is a $\Z_n$-sequence where each $X_i$ is picked independently and uniformly at random from $\Z_n$. One of the motivations to study random $\Z_n$-sequences from the zero-sum problem perspective is that relatively short random sequences are very likely to be members of $\X_k(m)$. Indeed, for any fixed $k\ge 2$, the probability that a random sequence $\mfX_m$ contains some element of $\Z_n$ with multiplicity at least $k+1$ is at most $\binom{m}{k+1}(1/n)^k\le\frac{m^{k+1}}{n^k}\to 0$ if $m=o(n^{k/(k+1)})$. Hence, if $m=o(n^{k/(k+1)})$ the sequence $\mfX_m$ is an element of $\X_k(m)$ \textit{with high probability (whp for short)}, i.e., with probability approaching $1$ as $n\to\infty$.  One also expects that with high probability, the random sequence $\mfX_m$ demonstrates no algebraic structure, so it is conceivable that random sequences of length significantly shorter than $D_A(\Z_n)$ are very likely to be weighted Davenport (resp. Erd\H{o}s) Z-sequences. This leads us naturally to the following problem:

\textbf{Problem:} Let $A\subset\Z_n\setminus\{0\}$ . Determine the least $m$ such that  a random $\Z_n$-sequence of length $m$ is a weighted Davenport (Erd\H{o}s) Z-sequence \textit{whp}. 
 
Our other results of this paper address this problem for weighted Erd\H{o}s Z-sequences, and weighted Davenport Z-sequences respectively for some natural choices of the set $A$:
  \begin{thm}\label{RandErd}
  Let $\mfX_m=(X_1,\ldots,X_{m})$ be a random $\Z_n$-sequence. Then \textit{whp} (as $n\to\infty$) the following statements hold:
  \begin{enumerate}
  \item $\mfX_{n+2}$ is an Erd\H{o}s Z-sequence.
  \item Let $A=\{a,b\}$. Suppose $A\cap\Z_n^*\ne\emptyset$ and suppose also that $a+b,a-b\in\Z_n^*$. Then $\mfX_n$ is an $A$-weighted Erd\H{o}s Z-sequence.
  \item  Let $A=\{1,-1\}$. Then $\mfX_{n+1}$ is an $A$-weighted Erd\H{o}s Z-sequence.
     \end{enumerate} 
  \end{thm}
    \begin{thm}\label{RandDav}
 Let $\mfX_m=(X_1,\ldots,X_{m})$ be a random $\Z_n$-sequence and suppose $\omega(n)$ is a function that satisfies $\omega(n)\rightarrow \infty $ as $n \rightarrow\infty$. 
 \begin{enumerate}
 \item The following hold  \textit{whp} (as $n\to\infty$) :
 \begin{eqnarray*}  \mfX_m\textrm{\ is\ a Davenport\ Z-sequence\ if\  }m&\ge& \log_2 n +\omega(n),\\
                                             \mfX_m\textrm{\ is\ not\ a\ Davenport\ Z-sequence\ if\  }   m&\le&  \log_2 n - \omega(n).\end{eqnarray*}
 \item Suppose $A=\{-1,1\}$. Then \textit{whp} (as $n\to\infty$) the following hold:
       \begin{eqnarray*} \mfX_m\textrm{\ is\ an }A\textrm{-weighted\ Davenport\ Z-sequence\ if\ } m&\ge& \log_3 n +\omega(n). \\
                                    \mfX_m\textrm{\ is\ not\ an }A\textrm{-weighted\ Davenport\ Z-sequence\ if\ }            m&\le&  \log_3 n - \omega(n).\end{eqnarray*}
\item Suppose $n=p_1\cdots p_r$ where $p_i$ are distinct odd primes and let $A=\Z_n^*$.  Then
\label{squarefree1} if $m\ge \omega(n)$ then $\mfX_m$ is an $A$-weighted Davenport Z-sequence  \textit{whp} (as $n\to\infty$).
   
 \end{enumerate}
 \end{thm} 
Note that in the last part of Theorem \ref{RandDav} (where the weight set is $A=\Z_n^*$) we do not have an analogous statement (as in the previous parts) with a lower bound for $m$ for $\mfX_m$ to not be a weighted Davenport $Z$-sequence {\it whp}; indeed, as we shall see later, if $p_1$ is sufficiently large, a random sequence of length $3$ is a $\Z_n^*$-weighted Davenport Z-sequence {\it whp}. This will follow as a consequence of our proof of Theorem \ref{RandDav}. 

An interesting aspect of our results is in the contrast between what we may dub the deterministic case versus the random case. For instance while $D(\Z_n)=n$,  a random sequence of size approximately $\log_2 n$ is already a Davenport Z-sequence with high probability. One may suspect that with a random $\Z_n$-sequence, the length of the sequence in order that it is    a weighted Davenport Z-sequence with high probability is significantly smaller than the corresponding length in the deterministic case. This is however not always true as evidenced by the second part of Theorem \ref{RandDav} corresponding to  the case $A=\{-1,1\}$ where the requisite sequence size drops down {\it merely by a constant factor}. But such instances may be more the exception than the rule. At the moment, we do not quite have a definitive answer to this question, but the overall motif seems unmistakeable.

The rest of the paper is organized as follows. In the following three sections we prove Theorems \ref{X_k(m)}, \ref{RandErd} and \ref{RandDav} respectively. The proof of the last part of Theorem \ref{X_k(m)} invokes a result of Szemer\'edi \cite{Sze} that settled a problem posed by Erd\H{o}s and Eggleston. While the paper \cite{Sze} settles a different conjecture of Erd\H{o}s and Heilbronn, the same proof can be suitably modified to also settle the Erd\H{o}s-Eggleston conjecture, and Szemer\'edi notes this as such in \cite{Sze}. We include a proof of this result in the appendix, for the sake of completeness since we were unable to find any published proof of the same. We make no claim to any originality towards {\it this} proof; only the presentation and perspective (which is implicit in Szemer\'edi's paper) is ours. We conclude the paper with some general remarks and a few open questions.
  
 \section{Proof of Theorem \ref{X_k(m)}}
We start with the following simple observation. For any finite abelian group $G$ and $A,B\subset G$ satisfying $|A|+|B|>|G|$ we necessarily have $A+B=G$, where $A+B:=\{a+b:a\in A,b\in B\}$. This follows easily since for any $x\in G$
 we have $|A|+|x-B|>|G|$, so $A\cap (x-B)\ne\emptyset$, and that implies $x\in A+B$. 
 \begin{proof} 
\begin{enumerate}
\item  In the proof of the first part, we shall deal with the case where $n$ is even, or $n$ is odd, separately.  We start with the odd case.
Let $\bfx = (a_1,a_1, \ldots, a_k,a_k,a_{2k+1}, \ldots, a_{2n+3})$ be a $\Z_{2n+1}$-sequence of length $2n+3$. In words, the elements $a_1,\ldots, a_k$ appear twice (for some $k$) while the elements $a_{2k+1},\ldots,a_{2n+3}$ appear once. In particular,  the $a_i$'s are pairwise distinct. 

Set $\ell= 2(n-k)+3$, and consider the sets $A = \{ a_1, a_2, \ldots, a_k, a_{2k+1},\ldots, a_{2k+(\ell-1)/2} \}$ and $B =\{a_1, a_2, \ldots, a_k, a_{2k+(\ell+1)/2},\ldots, a_{2k+\ell} \}$.
Clearly, $|A|= n+1$ and $|B| = n+2$.
Let $A'$ denote the set of all possible sums of $n$ different elements of $A$ and similarly, let $B'$ denote the set of all possible sums of  $n+1$ distinct elements of $B$.
Clearly, $|A'|= n+1 $ and $|B'| = n+2 $, so $|A'|+ |B'| = 2n+3 > 2n+1$, therefore by the observation at the beginning of this section, it follows that $A'+B'=\Z_{2n+1}$. Hence $0\in A'+B'$, or equivalently, $\bfx$ admits a non-trivial zero-sum subsequence of length $2n+1$. This establishes $\mathsf s^{(2)}(\Z_{2n+1}) \leq 2n+3$.

To show that $\mathsf s^{(2)}(\Z_{2n+1}) \geq 2n+3$, consider the sequence $$\bfx = (1,2,\ldots, n-1,n,n,n+1,n+1, n+2, n+3,\ldots, 2n).$$ It is easy to see that $\bfx$ is a sequence of length $2n+2$ satisfying $\sum_i x_i=0$ but
no element is equal to $0$. Moreover, no element appears in $\bfx$ more than twice, so this establishes that $\mathsf s^{(2)}(\Z_{2n+1}) \geq 2n+3$.
 Consequently, $\mathsf s^{(2)}(\Z_{2n+1}) = 2n+3$. 

For the even case, again as before, let  $\bfx = (a_1,a_1,\ldots,a_k,a_k,a_{2k+1},a_{2k+2},\ldots,a_{2n+2})$ be a $\Z_{2n}$-sequence of length $2n+2$ where each $a_i$ appears twice for $1\le i\le k$ and the elements $a_{2k+1},\ldots,a_{2n+2}$ appear exactly once, and so again, the $a_i$'s are pairwise distinct.  Let $\ell= 2(n-k)+2$, and  define subsets $$A = \{ a_1, a_2, \ldots, a_k, a_{2k+1},\ldots, a_{2k+\ell/2} \}, B =\{a_1, a_2, \ldots, a_k, a_{2k+ (\ell/2)+1}, \ldots, a_{2k+\ell} \} $$ 
of $\Z_{2n}$ and let $A'$ and $B'$ be the sets of sums of $n$ distinct elements of  $A$ and $B$ respectively.
Since $|A|= n+1$ and $|B| = n+1$, we have $|A'|= n+1$ and $|B'| = n+1 $, so that $|A'|+ |B'| = 2n+2 > 2n$. By the observation made earlier and the same argument as in the previous case, it follows that  $\mathsf s^{(2)}(\Z_{2n}) \leq 2n+2$ for $n\ge3$.

To complete the proof of the theorem, consider the sequence 
$\bfx = (1,2,3, 1,3, 5, 6,\ldots, n-1,n, n, n+1, n+2,\ldots, 2n-1)$ for $n\ge 5$ of length $2n+1$. It is easy to check that again the sum of all the elements of $\bfx$ equals $0$ but since $0$ itself is not in $\bfx$, it follows that $\mathsf s^{(2)}(\Z_{2n}) \geq 2n+2$ for $n\ge 5$. For $n=4$ and $n=3$ consider the sequence $\bfx=(1,2,3,1,3,4,5,6,7)$ and $\bfx= (1,2,1,2,3,4,5)$ respectively, and the same argument works with these sequences. This completes the proof.
\item We now turn to prove the second part of the theorem. We start with the proof of the upper bound which we shall prove by induction on $k.$

The case of $k=2$ is just a special case of the first part of Theorem \ref{X_k(m)} that was proved above. Suppose now that $k \geq 3,$ and suppose that the statement holds for values less than $k$.
 
Let $\bfa$ be a sequence of size $p+k,$ where each element
appears at most $k$ times. If no element in $\bfa$ has multiplicity $k$, then since $p+k >p+k-1$, we are through by induction, so we may assume that there is at least one element in $\bfa$ that appears $k$ times. For each $1\le i\le k$, let $\ell_i$ denote the number of distinct elements of $\bfa$ which appear precisely $k-i+1$ times, and set $L_i:=\sum_{j=1}^i \ell_j$. Write 
$$\bfa= \left(\underbrace{a_1,\cdots,a_1}_{k \text{ times}},\cdots, \underbrace{a_{L_1},\cdots,a_{L_1}}_{k \text{ times}}, \underbrace{a_{L_1+1},\cdots,a_{L_1+1}}_{k-1 \text{ times}},\cdots, \underbrace{a_{L_2},\cdots,a_{L_2}}_{k-1 \text{ times}},\cdots, \underbrace{a_{L_{k-1}+1},\cdots a_{L_{k}}}_{1\text{ time}}\right) $$ where the $a_i$ are distinct for  $1\leq i\leq L_k$ and 
$k\ell_1+(k-1)\ell_2+\cdots + \ell_k= p+k.$

Consider the sets $A_i$ consisting of those $a_j$ which appear at least $i$ times in $\bfa$. More precisely, let $A_i=\{a_1,a_2,\ldots,a_{\ell_1+\cdots+\ell_{k-i+1}}\},$ for $1\leq i\leq k$. Note that $|A_i|= \ell_1+\cdots+\ell_{k-i+1}$. As in the proof of the previous part,  $A_i'$ be the sum of all $(\ell_1+\ell_2+\cdots+\ell_{k-i+1}-1)$-sum of elements of $A_i$ for each $i$, so that $|A_i'|=|A_i|.$  By the
Cauchy-Davenport Theorem (see \cite{Nath}, theorem 2.3, page 44, for instance) we have

$$|\sum_{i=1}^{k}A_i'|\geq \text{min} (p, \sum_{i=1}^{k}|A_i'|-k+1 )
= \text{min } (p, p+k-k+1) =p.$$
In particular we have  $\sum_{i=1}^{k}A_i'=\Z_p$ and hence
$0 \in \sum_{i=1}^{k}A_i',$ where $A_i'$ is the sum of all $(\ell_1+\ell_2+\cdots+\ell_{k-i+1}-1)$ elements of the sequence. In particular, there is a subsequence of $\bfa$ of length $\sum_{i=1}^{k}(\ell_1+\ell_2+\cdots+\ell_{k-i+1}-1)= p+k -k = p$ whose sum is zero, and that completes the induction. 
 
To get the lower bound for $\mathsf s^{(k)}(\Z_{p})$,  write $p= (k-1)\ell +r$ for some $\ell$, and $0<r<k-1$, so that $\ell=\frac{p-r}{k-1}.$ Denote $p-1$ by $-1$. We shall construct a sequence $\bfx$ of length $p+k-1$ of the form 
$$\bfx=\left(\underbrace{-1,\ldots,-1}_{k \text{ times}}, \underbrace{0,\ldots,0}_{k-1 \text{ times}},\ldots,\underbrace{\ell-2,\ldots,\ell-2}_{k-1 \text{ times}},x_1,\ldots,x_{r+k-2}\right)$$ where $x_1,\ldots,x_{r+k-2}$ are pairwise distinct such that $\bfx$ has no zero-sum subsequence of length $p$. 

Towards that end, we shall show that if $p$ is sufficiently large (we will make this more precise soon) there exists a subset $\{x_1,\ldots,x_{r+k-2}\}\subset\{0,1\ldots,\ell-2\}$ in $\Z_p$ such that
 \begin{eqnarray}\label{sumtill l-2}\sum_{i=1}^{r+k-2}x_i= -(k-1)(1+2+\cdots+(\ell-2)) &=& -\frac{(p-r-(k-1))(p-r-2(k-1))}{2(k-1)}\end{eqnarray}
 in $\Z_p$. 
 
 First, to see why this suffices for our needs, observe that the sequence $\bfx$ has no element appearing more than $k$ times. If (\ref{sumtill l-2}) holds then the sum of all the elements of $\bfx$ equals $-k$. Denote the subsequence of $\bfx$ omitting the $-1$'s by $\bfa$, so that $\bfa$ is a $\Z_p$-sequence of length $p-1$.  Then note that the sum of any $s\le k-1$ elements of $\bfa$ (when viewed as integers) is at most $(k-1)(\ell-2)<p-k$. Consequently, for any $J\subset [1,p-1]$
with $|J|=s \le k-1$ we have $\bfa_J\ne -(s+1)$.  The upshot of this observation is that if $\bfx$ admits a zero-sum subsequence of length $p$, then its complementary subsequence, i.e., the subsequence consisting of the remaining elements in $\bfx$, is a subsequence of length $k-1$ which has sum $-k$. This complementary sequence consists of some subsequence $\bfb$ of $\bfa$ of length $s$ (for some $1\le s\le k-1$) along with some $k-1-s$ elements that are equal to $-1$.  But this implies that the sum of the elements of $\bfb$ must equal $-(s+1)$ which is not possible by the observation made above. So, it suffices to find $x_1,\ldots,x_{r+k-2}\subset\{0,\ldots,\ell-2\}$ such that (\ref{sumtill l-2}) holds. 

Suppose $\ell$ is even, say $\ell = 2t$ so that $t=\frac{p-r}{2(k-1)}$. A straightforward calculation shows that  (\ref{sumtill l-2}) gives us
 \begin{eqnarray*}\sum_{i=1}^{r+k-2} x_i &=& -\frac{(p-r-(k-1))(p-r-2(k-1))}{2(k-1)} \\&=&(r+k-1)(t-1)
 \end{eqnarray*} 
  in $\Z_p$. Set $x_{r+k-2} = 2t-(k-1), x_{r+k-3}=2t-(k-2), x_{r+k-4}=k-2-r$. If $r+k$ is odd then for $1\le i\le \frac{r+k-5}{2}$ set
 \begin{eqnarray*} x_{2i-1}&=&t - i,\\ x_{2i}&=&t+i\end{eqnarray*} 
 If $p\ge (k-1)(2k-1)$, then $t+\frac{r+k-5}{2}\le\ell-2$ and $0\le k-2-r<t-\frac{r+k-5}{2}$, so all these choices for $x_i$ are pairwise distinct and lie in the set $\{0,\ldots,\ell-2\}$. It is a straightforward check to see that $\displaystyle\sum_{i=1}^{r+k-2} x_i = (r+k-1)(t-1)$.  
 
 If $r+k$ is even, then set $x_{r+k-5}=t$ for  $1\le i\le \frac{r+k-6}{2}$ set
 \begin{eqnarray*}
 x_{2i-1}&=&t - i,\\ x_{2i}&=&t+i.\end{eqnarray*}
Again, it is straightforward to check that the $x_i$ satisfy the requirements, and this settles this case.
 
 If $\ell$ is odd, write $\ell=2t+1$ so that $t=\frac{p-(r+k-1)}{2(k-1)}$. In this case, (\ref{sumtill l-2}) simplifies as  
 \begin{eqnarray*}\sum_{i=1}^{r+k-2} x_i&=& -\frac{(p-r-(k-1))(p-r-2(k-1))}{2(k-1)}\\ &=&t(r+2(k-1))\\ &=&tr + p-(r+k-1)\\ &=& (t-1)r -k+1. \end{eqnarray*} in $\Z_p$.  Again, to satisfy these requirements, set $a=\left\lfloor\frac{(t-2)r}{r+k-3}\right\rfloor$, and set $x_i=a-1-i$ for $1\le i\le r+k-3$. Observe that  if $p=\Omega(k^4)$ then $r+k\le a\le t-2<\ell$, and  
 $$\sum_{i=1}^{r+k-3}x_i=(t-2)a - \frac{(r+k-3)(r+k-2)}{2}-s(r,k)$$ for some $0\le s(r,k)<r+k-3\le 2k-5$.  Now set 
 $$x_{r+k-2}= \frac{(r+k-3)(r+k-2)}{2}+2(r-1)-s(r,k)$$
 so that $\sum_i x_i = (t-1)r -k+1$. It is easy to see that for all integers $k$, $\frac{(k-2)(k-1)}{2}\ge 2k-5$, so $$\frac{(r+k-3)(r+k-2)}{2}+2(r-1)-s(r,k)>\frac{(k-2)(k-1)}{2}-(2k-5)\ge 0$$ which implies that $x_{r+k-2}> 0$. Also,
 $$x_{r+k-2}+(r+k-2)\le 2(2k-5)+(2k-5)(k-2)\le\frac{t-2}{k-3}\le a$$
 if $p\ge \Omega(k^4)$. These inequalities establish that  $x_i\in\{0,\ldots,\ell-2\}$ for all $1\le r+k-2$, are pairwise distinct, and  (\ref{sumtill l-2}) holds; this completes the proof.
  
 \textbf{Remark:} Our construction of the sequence $\bfx$ seems somewhat ad hoc,  and unlike some of the results to the Inverse Sum Problems (see \cite{GGS}), these extremal sequences do not appear to be unique in any sense. Also, 
as we make no attempt to optimize for $p_0(k)$ (in the statement of part 2 of Theorem \ref{X_k(m)}), it should be possible to find better examples than ours;  as noted earlier, we may take $p_0(k)=\Omega(k^4)$ but there may be much sharper bounds for $p_0(k)$. Another side to this argument is that for any $k\le O(p^{1/4})$ we have $\mathsf s^{(k)}(\Z_{p})=p+k$.

\item Before we prove the third part of Theorem \ref{X_k(m)} we state a theorem due to Szemer\'edi, which settled a conjecture of Erd\H{o}s and Eggleston. In order to state this, we need a definition. For a finite abelian group $G$ and $A\subset G$, by $\Sym(A)$ we mean the set of all those elements that occur as a sum of elements of some non-trivial subset of $A$. 
\begin{thm}\label{ErdEgg} There exists an absolute constant $1/2>\ep_0>0$ such that\footnote{The statement of Szemer\'edi's theorem in fact only states the existence of a constant $0<\ep_0<1$ but putting a smaller constant does not change the statement in any way, so we shall put a bound $\ep_0\le 1/2$ here for convenience.}
 the following holds. If $G$ is a finite abelian group and  $A\subset G$ then either $0\in \Sym(A)$ or $|\Sym(A)|\ge\ep_0|A|^2$.\end{thm}
A conjecture of Erd\H{o}s-Heilbronn, which was settled by Szemer\'edi \cite{Sze} states that there exists an absolute constant $C\ge 1$ such that for any abelian group $G$ of order $n$, and any subset $A\subset G$ with $|A|\ge C\sqrt{n}$, there exists some non-trivial subset of $A$ the sum of whose elements equals zero. As we mentioned in the introduction, Szemer\'edi remarks \cite{Sze} in his paper that the same methods actually can be extended to prove the aforementioned result as well. As mentioned in the Introduction, the proof of Theorem \ref{ErdEgg} appears in Appendix I.

We will also need another result due to Scherk, settling a problem proposed by L. Moser. The original version was for the case $k=2$ but a simple induction (on $k$) establishes the following version as well.
\begin{thm}\label{Shrk} (\cite{Scherk}, \cite{Yu})  Suppose $B_1,\ldots,B_k\subset\Z_n$  such that $0\in \cap_{i=1}^k B_i$, and suppose the equation $0=b_1+\cdots+b_k$ with 
$b_i\in B_i$ has the unique solution $b_i=0$ for all $i$, then $$|\sum_{i=1}^k B_i|\ge \min\left\{ \sum_{i=1}^k |B_i|-k+1, n\right\}.$$\end{thm}
We are now in a position to prove the third part of Theorem \ref{X_k(m)}.  We start with the lower bound.  

Consider the sequence $\bfx=(\underbrace{1,\cdots,1}_{k \text{ times}},\underbrace{2,\cdots,2}_{k \text{ times}},\cdots, \underbrace{t,\cdots,t}_{k \text { times}}),$ where $\frac{kt(t+1)}{2} < \frac{n}{2}.$ Set $N=kt$ so that $N=\Omega(\sqrt{nk})$.
 Then note that for any non-trivial $I\subset [N]$,  $0<\bfx_I<n/2$ by choice, so $\bfx$ does not admit a zero-sum subsequence. 
  
  Let $C\ge 1$ be the constant from Szemeredi's theorem settling the Erd\H{o}s-Heilbronn conjecture, and let $\ep_0$ be the constant from Theorem \ref{ErdEgg}. Let $C^*=2C/\ep_0$. We claim that if $\bfx$ is a $k$-restricted $\Z_n$-sequence of length $m \ge \lceil C^*\sqrt{nk}\rceil +1$, then $\bfx$ is a Davenport Z-sequence, and we shall prove this by induction on $k$.  The case $k=1$ simply follows from Szemer\'edi's theorem, so suppose $k>1$ and suppose the result holds for smaller values than $k$. 
  
Let $x_0$ be a non-zero element of the sequence $\bfx$ and let  $\bfy$ be the sequence obtained from $\bfx$ by removing $x_0$. Then $\bfy$ is a sequence of length at least $\lceil C^*\sqrt{nk}\rceil\ge C^*\sqrt{nk}$.    Write  
$$\bfy=(\underbrace{x_1,\ldots,x_1}_{\ell_1 \text{ times}},\ldots,\underbrace{ x_{r},\ldots,x_{r}}_{\ell_r \text{ times}})$$ with $k\ge\ell_1\ge\cdots\ge\ell_r$ and where $x_1,\ldots,x_r$ are pairwise distinct non-zero elements of $\Z_n$.
 If $k>\ell_1$, then we are through by induction since $\lceil C^*\sqrt{nk}\rceil \ge \lceil C^*\sqrt{n(k-1)}\rceil +1$ and no element of  $\bfy$ appears more than $k-1$ times. So, we may assume that $\ell_1=k$.
 
For  $1\le i \le k$ let $R_i$ denote the set of those $x_j$ that appear at least $i$ times. Clearly $|R_1|\ge\cdots\ge|R_k|\ge 1$. Let $r_i=|R_i|$. If $r_1\ge C\sqrt{n}$ then again, by Szemer\'edi's Theorem $0\in\Sym(R_1)$, and we are through,  so we may assume that $r_1<C\sqrt{n}$. 

 If $r_k \leq \frac{C^*\sqrt{n}}{\sqrt{k}+\sqrt{k-1}}$ then the subsequence of $\bfy$ obtained by removing the elements of $R_k$ is a $(k-1)$-restricted subsequence of length at least $C^*\sqrt{nk}-r_k\ge C^*\sqrt{n(k-1)}$, so again, by induction, $\bfy$ is a Davenport Z-sequence. So, again, we may assume that $$r_k> \frac{C^*\sqrt{n}}{\sqrt{k}+\sqrt{k-1}}\ge (C/\ep_0) \sqrt{\frac{n}{k}}.$$  
 Let $ B_i = \Sym(R_i)\cup\{0\}$ for $i=1,\ldots,k.$  By Theorem \ref{ErdEgg} either $0\in \Sym(R_i)$ in which case we are through, or $|B_i| \geq \ep_0|R_i|^2$ for each $i$.
 If $0$ can be written as $b_1+\cdots+ b_k$ with $b_i\in B_i$ with at least one of the $b_i\ne 0$,  then again, we are through, so we may assume that $0=b_1+\cdots+b_k$ with $b_i\in B_i$ implies that $b_i=0$ for each $i$. Then by the result of Scherk (Theorem \ref{Shrk}), 
 $$|\sum_{i=1}^kB_i| \geq \min\left(n, \sum_{ i=1}^k\ep_0|R_i|^2 -k+1\right) = n $$ as $|R_i|\ge (C/\ep_0) \sqrt{n/k}$ for all $i$ and $\frac{C^2}{\ep_0}>2$.  Therefore, $\sum_{i=1}^kB_i = \Z_n,$ which implies that $-x_0 = \sum_{i=1}^kb_i$ for some choices of  non-zero $b_i \in B_i.$ In particular, $\bfx$ is a Davenport Z-sequence as required. This completes the induction and the proof.
  \end{enumerate}
 \end{proof}
 
\section{Proof of Theorem \ref{RandErd}}
Before we get into the proofs, we set up a bit of notation here which will be used repeatedly. By $\bI(x)$ we mean the indicator function which equals $1$ if $x=0$ and is zero otherwise. For a subset $I\subset [m]$, we shall denote by $\mfX_I$, the sum $\mfX_I:=\sum_{i\in I} X_i$.  We shall sometimes write $\mfX(I)$ instead of $\mfX_I$ for notational convenience.

Before we launch into the proofs, we state a simple lemma.
\begin{lem}\label{pwi}
\begin{enumerate}
\item[(a)] Let $X_1,\ldots,X_m$ be uniformly distributed on $\Z_n$ and suppose that for any $i\ne j$, $X_i, X_j$ are independent. Then, for any $a_1,\ldots, a_m \in \Z_n^* $ 
the random variable $\sum_{i=1}^m a_iX_i$ is uniformly distributed on $\Z_n$. 
\item[(b)] Suppose $\mfX_m=(X_1,\ldots,X_m)$ where the $X_i$ are independent and uniformly distributed over $\Z_n$.  Let $I,J\subset [m]$ be non-empty subsets of indices, and let $Y_1:=\sum_{i\in I} a_iX_i$ and $Y_2:=\sum_{j\in J} b_j X_j$ for some $a_i,b_j\in\Z_n^*$. Let $i_0\ne j_0$ be distinct indices in $I\cup J$, and let $x_i\in \Z_n$ for all $i\in I\cup J\setminus\{i_0,j_0\}$ be arbitrary. If the system of equations 
\begin{eqnarray}\label{uniquesoln} Y_1=\alpha, Y_2=\beta, X_i=x_i\textrm{\ for\ all\ } i\neq i_0,j_0\end{eqnarray} admits at most $r$ solutions for the pair $(X_{i_0}, X_{j_0})$ for any $\alpha,\beta, x_i$, then $\bP(Y_1=\alpha, Y_2=\beta)\le\frac{r}{n^2}$. In particular, if the system has a unique solution for the pair $(X_{i_0}, X_{j_0})$, then $Y_1$ and $Y_2$ are independent. 
\end{enumerate}
\end{lem}
\begin{proof} (Of Lemma \ref{pwi}) Both the parts work by a conditioning argument. For the first part, pick an arbitrary $\Z_n$-sequence $\bfx=(x_2,\ldots,x_m)$ and condition on $X_i=x_i$ for $i\ge 2$. Then $$\bP\left(\sum_i a_iX_i=\alpha\mid X_i=x_i\textrm{ for } i\ge 2\right) = \bP\left(X_1=\frac{1}{a_1}\left(\alpha-\sum_{i>1} a_ix_i\right)\right)=\frac{1}{n}$$ and this holds for all choices of the sequence $\bfx$. Hence the same holds for the unconditional probability as well.

For the second part,  let $\bfx$ be an arbitrary $\Z_n$-sequence indexed by the elements of $T:=I\cup J\setminus\{i_0,j_0\}$ and condition on $X_{\ell}=x_{\ell}$ for all $\ell\in T$. Let the number of solutions for $(X_{i_0}, X_{j_0})$ to the system of equations (\ref{uniquesoln}) be $R(\alpha,\beta,\bfx)$.  
Then
$$\bP\left(Y_1=\alpha, Y_2=\beta\mid X_{\ell}=x_{\ell}\textrm{ for all }\ell\in T\right)=\frac{R(\alpha,\beta,\bfx)}{n^2}\le \frac{r}{n^2}$$
by the hypothesis.  Since this holds for all choices of $\alpha, \beta, \bfx$, the same inequality holds unconditionally as well, and that completes the proof.\end{proof}
\begin{proof}  \begin{enumerate}
 \item Let $\mfX_{n+2}=(X_1,\ldots,X_{n+2})$ be a random $\Z_n$-sequence.   Let 
 $\Hy:=\{ I\subset[n+2]: |I|= n \}$ and $N:=\sum_{I\in \Hy} \mathbb{I}(\mfX_I)$.
Then	
 $$\mathbb{E}(N) = \sum_{I\in \Hy}\mathbb{P}(\mfX_I=0) =\frac{1}{n} \binom{n+2}{n} = \frac{(n+2)(n+1)}{2n}=\Omega(n),$$
	and, $$\Var (N) = \sum_{I\in \Hy}\Var (\mathbb{I}(\mfX_I)) +  \sum_{\substack{I\ne J\\ I,J\in \Hy}} \Cov(\mathbb{I}(\mfX_I),
	\mathbb{I}(\mfX_J)).$$
	 Pick $i\in I\setminus J$ and $j\in J\setminus I$, and consider the equations $\mfX_I=\alpha, \mfX_J=\beta$, and $X_{\ell}=x_{\ell}$ for all $\ell\neq i,j$. It is straightforward to check that 
	this system of equations admits a unique solution for $(X_i,X_j)$, so by Lemma \ref{pwi}(b) it follows that $\{\mfX_I\}_{I\in \Hy}$ are pairwise independent. Consequently,  
	$\Cov (\mathbb{I}(\mfX_I),\mathbb{I}(X_J))=0$ for $I\ne J\in \Hy$. Also, $\Var (\mathbb{I}(\mfX_I))=\frac{1}{n}(1- \frac{1}{n}),$ so 
	$$\Var (N) = \sum_{I\in \Hy} \Var (\mathbb{I}(\mfX_I))=\frac{1}{n}\left(1- \frac{1}{n}\right)\frac{(n+2)(n+1)}{2}=O(n).$$ 
	Therefore, by Chebyshev's inequality we have,
	$$\bP(N=0)\le \mathbb{P}(|N-\mathbb{E}(N)| \ge \mathbb{E}(N)) \le \frac{\Var(N)}{
	(\mathbb{E}(N))^2} = \frac{\frac{1}{2}(1-\frac{1}{n})}{\frac{1}{4}
	(1+\frac{2}{n})(n+1)}=O\left(\frac{1}{n}\right).$$
	Hence $\mathbb{P}(N > 0)\rightarrow 1$ as $n\to\infty$.
\item Let $\mfX_n=(X_1,\ldots,X_n)$ be a random $\Z_n$-sequence and $A = \{ a,b \}$. Without loss of generality, assume $\{a,a+b,a-b\} \subset \Z_n^*$. 
	Let $\Hy:=\{I \subset [n]:\emptyset\subsetneq I\subsetneq [n]\}$ and let $$N:=\sum_{I\in \Hy} \mathbb{I}_{a\mfX(I)+b\mfX(\overline{I})}$$ where $\overline{I}:=[n]\setminus I$.
	
It is easy to see that  $$\mathbb{E}(N) = \sum_{I\in\Hy}\mathbb{P}\Bigg(a\mfX_I+b\mfX_{\overline{I}}=0\Bigg) =\frac{2^n-2}{n}$$ as each of the probabilities in the summation above equals $1/n$ by Lemma \ref{pwi}(a).  Again,  $$\Var (N) = \sum_{I\in \Hy}\Var(\mathbb{I}_{a\mfX(I)+b\mfX(\overline{I})}) + 
	 \sum_{\substack{I\ne J\\ I,J\in\Hy}} \Cov(\mathbb{I}_{a\mfX(I)+b\mfX(\overline{I})},\mathbb{I}_{a\mfX(J)+b\mfX(\overline{J})})$$
	and it follows that $$\Var\left(\mathbb{I}_{a\mfX(I)+b\mfX(\overline{I})}\right) = \frac{1}{n}\left(1- \frac{1}{n}\right).$$  
	We claim that 
	\begin{eqnarray}\label{abcase} \Cov(\mathbb{I}_{a\mfX(I)+b\mfX(\overline{I})},\mathbb{I}_{a\mfX(J)+b\mfX(\overline{J})})=0\textrm{\ for }I\ne J.\end{eqnarray}
	 Hence
	$$\Var (N) = \sum_{I\in \Hy} \Var(\mathbb{I}_{a\mfX(I)+b\mfX(\overline{I})}) < \frac{2^n}{n}$$  so again by
	Chebyshev's inequality, 
	$$\bP(N=0)\le \mathbb{P}(|N-\mathbb{E}(N)| \ge \mathbb{E}(N)) \le O\left(n2^{-n}\right)$$ 
	which gives us what we seek.
	
 So to complete the proof, it remains to establish (\ref{abcase}) which amounts to showing that    	
	$$\mathbb{P}\Bigg[\big(\mfX(I)=-(b/a)\mfX(\overline{I})\big)\textrm{ and } \left(\mfX(J)=-(b/a)\mfX(\overline{J})\right)\Bigg]=\frac{1}{n^2}$$ whenever $I\ne J$.
	
	Towards this end, suppose $I\ne J$ be subsets of $[m]$. We shall appropriately choose $i\ne j$ in $I\cup J$ and consider the equations  $\mfX_I=\alpha, \mfX_J=\beta$, and $X_{\ell}=x_{\ell}$ for all $\ell\neq i,j$. We shall use Lemma \ref{pwi}(b) and show that this system of equations has a unique solution for $(X_i,X_j)$. We have the following cases:
\begin{enumerate}
\item[i.] $I\setminus J\ne\emptyset$ and $J\setminus I\ne\emptyset$: Pick $i\in I\setminus J$, $j\in J\setminus I$. The system of equations  reduces to two equations of the form $aX_i+bX_j=\xi_1,bX_i+aX_j=\xi_2$ for some $\xi_1,\xi_2\in\Z_n$ and since by assumption $a^2-b^2\in\Z_n^*$, these equations admit a unique solution for $(X_i,X_j)$. Hence we are through by Lemma \ref{pwi}(b).  
\item[ii.] If the previous case does not arise, then either $I\subset J$ or $J\subset I$. Suppose without loss of generality that $I\subset J$.  Pick $i\in I, j\in J\setminus I$. Again, the system of equations reduces to two linear equations of the form $aX_i+bX_j=\xi_1,aX_i+aX_j=\xi_2$ for some $\xi_1,\xi_2\in\Z_n$. Again, since $a,a-b\in\Z_n^*$, it is easy to see that the system of equations admits a unique solution for $(X_i,X_j)$ and so by Lemma \ref{pwi}(b) we are through. \end{enumerate}
\item Let $\mfX_{n+1}=(X_1,\ldots,X_{n+1})$ be a random $\Z_n$-sequence, and define 
\begin{eqnarray*}\Hy&:=&\{(\{I,J\}: I,J\subset[n+1],  I,J\ne\emptyset,I\cap J =\emptyset, |I\cup J| = n \},\\
	                                   N &:=& \sum_{\{I, J\}\in \Hy} \mathbb{I}(\mfX_{I,J}),\end{eqnarray*} where
 	$\mathbb{I}(\mfX_{I,J}):=\bI(\mfX_I-\mfX_J)$.  Note that $|\Hy|=(n+1)(2^{n-1}-1)$.
	
	Again, \begin{eqnarray*} \mathbb{E}(N) = \sum_{\{I,J\}\in \Hy}\mathbb{P}(\mfX_I=\mfX_J) = \frac{(n+1)(2^{n-1}-1)}{n}\end{eqnarray*} for the same reason as in the preceding discussions.
Again we shall bound $\Var(N)$ but unlike the proof of parts 1 and 2 of Theorem \ref{RandErd}, we do not always have pairwise independence of the random variables $\mfX_{I,J}$ in this case. But it turns out that most pairs $\{I,J\}\ne\{I',J'\}$ are pairwise independent and that is sufficient for our purpose here.  
	
Suppose $\{I,J\}\ne \{I',J'\}$ are pairs in $\Hy$ such that $I\cup J=[n+1]\setminus\{a\}$ and $I'\cup J'=[n+1]\setminus\{b\}$ with $a\ne b$.   We claim that the random variables $\mfX_{I,J}$ and $\mfX_{I',J'}$ are independent. Without loss of generality, suppose $a\in I'$ and $b\in I$. Let $T=[n+1]\setminus\{a,b\}$ and let $\bfx$ be an arbitrary $\Z_n$-sequence indexed by the elements of $T$.  The system of equations $\mfX_I-\mfX_J=\alpha, \mfX_{I'}-\mfX_{J'}=\beta, X_{\ell}=x_{\ell}$ for all $\ell\in T$ reduces to 
	$X_a=\bfx_{J'}-\bfx_{I'\setminus\{a\}}$ and $X_b=\bfx_J-\bfx_{I\setminus\{b\}}$, thereby giving a unique solution for $(X_a,X_b)$; consequently, by Lemma \ref{pwi}(b) $\mfX_{I,J},\mfX_{I',J'}$ are independent. 
	
The aforementioned argument in fact shows that if $n$ is odd, then $\mfX_{I,J}$ are {\it all} pairwise independent. Indeed, suppose $I\cup J= I'\cup J'=[n]$ (without loss of generality). Since at least one of $I,J$ meets both $I'$ and $J'$ nontrivially, suppose $I$ intersects both $I', J'$ and pick $i\in I\cap I'$ and $j\in I\cap J'$. Then for $T=[n+1]\setminus\{i,j\}$ and any $\Z_n$-sequence $\bfx$ indexed by the elements of $T$, the same system of equations as before reduces to $X_i+X_j=\xi_1,X_i-X_j=\xi_2$ for some $\xi_1,\xi_2\in\Z_n$. But observe that if $n$ is odd, this system of equations admits a unique solution for $(X_i,X_j)$ and so again by Lemma \ref{pwi}(b), it follows that $\mfX_{I,J},\mfX_{I',J'}$ are independent. 
	
	If $n$ is even, then this might not admit any solution at all, in which case $\mathbb{I}(\mfX_{I,J}),\mathbb{I}(\mfX_{I',J'})$ are negatively correlated. If $\xi_1+\xi_2$ is even, then the pair of linear equations above admit two possible solutions for $X_i$ (say), and for each of these, a unique value for $X_j$. Consequently, in these cases, the system of equations considered above admits at most $2$ solutions $(X_i,X_j)$ {\it for any choice of} $\alpha,\beta,\bfx)$. So again, by Lemma \ref{pwi}, it follows that 
	$$\Cov(\mathbb{I}(\mfX_{I,J}),\mathbb{I}(\mfX_{I',J'}))=\bP(\mfX_{I,J}=0\textrm{ and } \mfX_{I',J'}=0)-\bP(\mfX_{I,J}=0)\bP(\mfX_{I',J'}=0)\le \frac{2}{n^2}-\frac{1}{n^2} = \frac{1}{n^2}.$$
	
	To complete the proof, we need a bound on the number of distinct pairs $\{I,J\},\{I',J'\}$ of members of $\Hy$ such that $I\cup J=I'\cup J'$. Since there are $n+1$ choices for the element not in $I\cup J$, $2^n-2$ choices for $I$ and  $2^n-3$ further choices for $I'$, we have 
	\begin{eqnarray*}\Var (N) &=& \sum_{\{I,J\}\in \Hy} \Var(\mathbb{I}(\mfX_{I,J})) +  2\sum_{\substack{\{I,J\}\neq\{I',J'\}\\\{I,J\},\{I',J'\}\in\Hy}} \Cov(\mathbb{I}(\mfX_{I,J}),
	\mathbb{I}(\mfX_{I',J'}))\\
	&=& \frac{1}{n}\left(1- \frac{1}{n}\right)(n+1)(2^{n-1}-1) +  2\frac{(n+1)(2^n-2)(2^n-3)}{n^2},\\
	&\le& \frac{3\cdot 2^{2n}}{n}\end{eqnarray*}
	so, by Chebyshev's inequality,
	$$\bP(N=0)\le\mathbb{P}(|N-\mathbb{E}(N)| \ge \mathbb{E}(N)) \le \frac{\Var(N)}{
	(\mathbb{E}(N))^2} = O\left(\frac{1}{n}\right),$$ so again, we have $\mathbb{P}(N > 0)\rightarrow 1.$
	This completes the proof of this part and that of the theorem as well.
\end{enumerate}\end{proof}
 
 \textbf{Remark:} Since one needs a sequence of size at least $n$ in order that it is an Erd\H{o}s Z-sequence, the results of the previous theorem assert that for random $\Z_n$-sequences one does not need much more than the {\it absolute} minimum required size for it to be an Erd\H{o}s Z-sequence \textit{whp}. In particular, if $n$ is prime, then for any $A$ of size at least $3$, a random $\Z_n$-sequence of length $n$ is \textit{whp} an $A$-weighted Erd\H{o}s Z-sequence. 
  
 \section{Proof of Theorem \ref{RandDav}}
 Let $\mfX_m=(X_1,\ldots,X_m)$ be a random $\Z_n$-sequence. The proof of the first and second parts of this theorem again involve the application of Chebyshev's inequality. The third part needs an additional lemma which we shall state at the appropriate juncture. 
  
 Let $\omega(n)$ be an arbitrary function satisfying $\omega(n)\to\infty$ as $n\to\infty$.
\begin{proof}
 \begin{enumerate}
 \item We start with the lower bound. Fix $m\le \log_2 n - \omega(n)$, and let $\Hy_m:=\{I: I\subset [m], I\ne\emptyset \}$ and $N_m:= \sum_{I\in \Hy_m} \mathbb{I}(\mfX_I)$.
	
	Then $$\mathbb{E}(N_m) = \sum_{I\in \Hy_m}\mathbb{P}(\mfX_I=0) = \frac{2^m-1}{n} \le \frac{1}{2^{\omega(n)}}$$ and consequently it follows - by Markov's inequality - that $N_m=0$ \textit{whp}.	
	
	For the upper bound, let $m \ge \log_2 n +\omega(n)$; then 
	$$\Var (N_m) = \sum_{I\in \Hy_m}
	\Var (\mathbb{I}(\mfX_I)) +  \sum_{\substack{I\ne J\\I,J\in\Hy_m}} \Cov(\mathbb{I}(\mfX_I),\mathbb{I}(\mfX_J)).$$
By a very similar argument to that in the proof of part 1 of Theorem \ref{RandErd}, it follows that  $\mfX_I$'s are pairwise independent, so that 
	$ \Cov(\mathbb{I}(\mfX_I),\mathbb{I}(\mfX_J))=0$ for $I\ne J\in \Hy_m$. Hence, 
	$$\Var(N_m) = \sum_{I\in \Hy_m} \Var(\mathbb{I}(\mfX_I))=\frac{1}{n}\left(1- \frac{1}{n}\right)(2^m-1)\le \frac{2^{(\log_2n+\omega(n))}}{n},$$
 so by Chebyshev's inequality
	$$\bP(N_m=0)\le\frac{\Var(N_m)}{\bE^2(N_m)} \le \frac{1}{2^{\omega(n)}}.$$
	This completes the proof of the first part.
	
 \item   Note that  if $\bfx=(x_1,\ldots,x_m)$ is a $\{1,-1\}$-weighted Davenport Z-sequence, then there are disjoint subsets $I,J$ with $I\cup J\ne\emptyset$ (with one of them possibly empty) such that $\bfx_I=\bfx_J$. In view of this, define 
 \begin{eqnarray*} \Hy_m&:=&\{\{I,J\}: I,J\subset [m], I\cup J\ne\emptyset, I\cap J=\emptyset\},\\
                              N_m&:=& \sum_{\{I,J\}\in \Hy} \mathbb{I}(\mfX_{I,J}),\end{eqnarray*} 
     where again $\mathbb{I}(\mfX_{I,J}):=\bI(\mfX_I-\mfX_J)$.  Note that we allow for $I$ (resp. $J$) to be the empty set in which case $\mfX_I$ (resp $\mfX_J$) is zero.
     
First, observe that $|\Hy_m|=\frac{3^m-1}{2}$. Indeed, the number of ordered pairs of sets $(I, I\cup J)$ where both are not empty is $3^n-1$ and these ordered pairs count each member of $\Hy_m$ twice. Hence if $m\le\log_3 n-\omega(n)$,                 
  $$\mathbb{E}(N_m) = \sum_{(I,J)\in \Hy_m}\mathbb{P}(\mfX_I=\mfX_J) = \frac{3^m-1}{2n}<\frac{1}{3^{\omega(n)}}$$
  which implies $\mathbb{P}(N_m >0) \rightarrow 0$, and this establishes that  {\it whp} $\mfX_m$ is not a $\{1,-1\}$-weighted Davenport Z-sequence.
 
 For the other part, suppose $m\ge\log_3 n+\omega(n)$. As before we bound
   $$\Var (N_m) = \sum_{\{I,J\}\in \Hy_m}	\Var(\mathbb{I}(\mfX_{I,J}))+ \sum_{\substack{\{I,J\},\{I',J'\}\in\Hy_m\\\{I,J\}\ne\{I',J'\}}} \Cov(\mathbb{I}(\mfX_{I,J}),\mathbb{I}(\mfX_{I',J'}))$$                         
 by using Lemma \ref{pwi}. The following claim is crucial: 
 \begin{claim}\label{Uncorr}
 $\Cov(\mathbb{I}(\mfX_{I,J}),\mathbb{I}(\mfX_{I',J'}))=0$ unless, one of the following two cases arise: 
 \begin{enumerate}
 \item $I=\emptyset, I',J'\ne\emptyset$ and $J=I'\cup J'$.
 \item  $I, J,I',J'\ne\emptyset$, $I\subset J'\subset I\cup J$ and $I'\subset J$.\end{enumerate} In these cases, $$\Cov(\mathbb{I}(\mfX_{I,J}),\mathbb{I}(\mfX_{I',J'}))\le\frac{1}{n^2}.$$\end{claim}
 The proof of the claim while somewhat straightforward, is a study of several cases that arise, so we relegate it to  Appendix II.
 
Call a pair $\{I,J\},\{I',J'\}\in \Hy_m$ bad if one of the two cases above holds.  Note that the number of bad pairs $\{I,J\},\{I',J'\}\in\Hy_m$ of the first type is at most $4\cdot 3^m$ and the number of bad pairs of the second type is at most $4\cdot 5^m$. Consequently,  
 \begin{eqnarray*} \bP(N_m=0)\le\frac{\Var(N_m)}{\bE^2(N_m)}\le \frac{1}{\bE(N_m)} + O\left(\frac{5}{9}\right)^m=o(1)\end{eqnarray*}
  and as before, that completes the proof.

 \item We start with a lemma. For integers $x_1,\ldots, x_m$ we denote their gcd by $\textrm{gcd}(x_1,\ldots,x_m)$.
 \begin{lem}\label{relprime} Suppose $n=\displaystyle\prod_{i=1}^{r}p_i,$ where  $p_1<\cdots <p_r$ are odd primes and let $m\ge 2$ be a positive integer.  For any $x_1,\ldots,x_m$ there exist non-zero integers $a_1,\ldots,a_m\in\Z$ such that $\textrm{gcd}(a_i,n)=1$ for all $1\le i\le m$, and satisfying $\sum_{i=1}^m a_ix_i=\textrm{gcd}(x_1,\ldots,x_m)$.\end{lem}
 \begin{proof} (Proof of Lemma \ref{relprime}) We proceed by induction on $m$.  We first start with the case that $\textrm{gcd}(x_1\ldots,x_m)=1$ and then move to the general case later. Suppose $m=2$, and we have relatively prime integers $x_1,x_2$. We shall establish the lemma in this case by induction on $r$. Suppose $r=1$. In particular, there are integers $a_1,a_2\in\Z$ such that $a_1x_1+a_2x_2=1$. We claim that there exists $\lambda\in\Z$ such that both $a_1^*=a_1+\lambda x_2$ and $a_2^*=a_2-\lambda x_1$ are relatively prime to $n=p_1$. This would complete the proof in this case since $a_1^*x_1+a_2^*x_2=1$.  
 
Suppose  that $p_1$ divides $x_1$.  Then $\textrm{gcd}(p_1, a_2)=\textrm{gcd}(p_1,x_2)=1$. In particular, $a_2, x_2\ne 0$ in $\Z_{p_1}$, so $a_1+\lambda x_2\ne 0$ if and only if $\lambda\neq -(a_1/x_2)$ in $\Z_{p_1}$. Hence there is an available choice for $\lambda$ that satisfies the claim. The same argument works if $p_1$ divides $x_2$. So now suppose that $\textrm{gcd}(p_1,x_1)=\textrm{gcd}(p_1,x_2)=1$. Then as before, a choice for $\lambda$ is unsuitable if either $\lambda=(a_2/x_1)$ or $\lambda=-(a_1/x_2)$. Since $p_1$ is odd, there is a choice for $\lambda\in\Z_{p_1}$ such that both these choices are avoided, and so the claim holds. 

Let $n'=n/p_r$ and suppose (by induction on $r$) that there are $a_1, a_2$ such that $\textrm{gcd}(a_i,n')=1$ for $i=1,2$ such that $a_1x_1+a_2x_2=1$. Consider $a_1^*=a_1+\lambda n'x_2, a_2^*=a_2-\lambda n'x_1$. Since $\textrm{gcd}(n',p_r)=1$, arguing in the same manner as before, it follows that there is a choice for $\lambda$ such that $a_1^*, a_2^*$ are both non-zero in $\Z_{p_r}$. Furthermore,  $\textrm{gcd}(a_1^*,n')=\textrm{gcd}(a_2^*,n')=1$, so the proof of the lemma is complete for the case $m=2$ when $\textrm{gcd}(x_1,x_2)=1$.  In the general case, if $\textrm{gcd}(x_1, x_2)=d$ then consider $x_1'=x_1/d, x_2'=x_2/d$ so that $(x_1',x_2')=1$ and use the preceding argument with $x_1', x_2'$ instead to get $a_1,a_2$ as desired. 
  
 Suppose now that the lemma holds for smaller values than $m$. Write $d=\textrm{gcd}(x_1,\ldots,x_m)$ and let $\textrm{gcd}(x_1,\ldots,x_{m-1})=d'$ so that $\textrm{gcd}(d',x_m)=d$. By the case $m=2$, there exist $a, b$ such that $\textrm{gcd}(a,n)=\textrm{gcd}(b,n)=1$ and $ad'+bx_m=d$. Again by induction, there exist $a_i'$ for $1\le i\le m-1$ such that $\textrm{gcd}(a_i',n)=1$ and $\displaystyle\sum_{i=1}^{m-1}  a_i'x_i =d'$. Then it follows that $a_i=aa_i'$ for $1\le i\le m-1$ and $a_m=b$ satisfy the requirements of the statement of the lemma, and this completes the induction.
 \end{proof}
 
 Let $n=\prod_{i=1}^r p_i$ be a product of distinct odd primes and suppose $m=\omega(n)$. Let $\mfX_{2m}:=(X_1,\ldots,X_{2m})$ be a random $\Z_n$-sequence. We make the following claim:
 \begin{claim} \label{gcd} {\it whp} $gcd(X_1,\ldots,X_m)$ and $gcd(X_{m+1},\ldots, X_{2m})$ are relatively prime to $n$.\end{claim} 
 \begin{proof}(Proof of Claim \ref{gcd}) For any element $x\in \Z_n$,  let $\pi(x):=\{i: p_i\textrm{ divides } x\}$. Suppose $X$ is picked uniformly from $\Z_n$.
 For a fixed set $A\subset [r]$, we claim that $$\bP(\pi(X)=A)= \prod_{i\in A}\frac{1}{p_i}\prod_{i\notin A}\left(1-\frac{1}{p_i}\right).$$ In other words, each element $i\in [r]$ is picked independently into $A$ with probability $1/p_i$. This follows easily from the Chinese Remainder Theorem: $\Z_n\simeq \Z_{p_1}\times\cdots\times\Z_{p_r}$ as rings. If  $\pi_i:\Z_n\to\Z_{p_i}$ is the projection map then $p_i$ divides $x$ if and only if $\pi_i(x)=0$ and the claim follows immediately.
 
Consequently,  for each $i$, the probability that $p_i$ divides $X_j$ for all $1\le j\le m$ is $1/p_i^m$ so that
\begin{eqnarray*}\bP(\textrm{There exists } i\textrm{ such that } p_i\textrm{ divides } X_j\textrm{ for all } j)&\le&\sum_{i=1}^r\frac{1}{p_i^m}\\
  &\le& \frac{1}{p_1^{m-2}}\sum_{p\textrm{\ prime}} \frac{1}{p^2}\\ 
  &\le& \frac{1}{p_1^{m-2}}\left(\sum_{n\ge 1}\frac{1}{n^2}\right)=o(1) \end{eqnarray*} since $m\ge \omega(n)$.
Hence it follows that {\it whp} both $(X_1,\ldots, X_m)$ and $(X_{m+1},\ldots, X_{2m})$ are relatively prime to $n$. \end{proof}
We are now in a position to complete the proof of the third part of Theorem \ref{RandDav}. By lemma \ref{relprime} there exist $a_i,b_i\in\Z_n^*$ such that {\it whp} $a_1X_1+\cdots+a_mX_m=u_1$ and $b_1X_{m+1}+\cdots+b_mX_{2m}=u_2$, for $u_1,u_2\in\Z_n^*$.  Hence $$\sum_{i=1}^m (-u_2a_i) X_i+\sum_{i=1}^m (u_1b_i)X_{m+1}=0$$ holds {\it whp} and this establishes that {\it whp} $\mfX_{2m}$ is a Davenport Z-sequence and the proof is complete.

 \end{enumerate} \end{proof}
{\bf Remark:}   As mentioned in the introduction, and as is evident from the proof, the bound $m\ge\omega(n)$ in the last part of Theorem \ref{RandDav} is not always necessary. Indeed, for $\ep>0$, if  $p_1>\ep^{-1}$ then the same proof shows that for $m=3$ the probability that a random sequence $\mfX_3$ of length $3$ is a Davenport Z-sequence is at least $1-\ep$. 

\section{Concluding remarks}
\begin{itemize}
\item As we remarked, our proof of Theorem \ref{X_k(m)}, part 2, works only for $p$ prime, since we invoke the Cauchy-Davenport Theorem there. A more general result due to Kneser (see \cite{Nath}, chapter 4) provides a lower bound for all general $n$, but it is not clear how to adapt our proof effectively to the general case, or even to the case $n=p^m$ for $m\ge 2$, and $p$ prime. But we believe the following conjecture holds: 
\begin{conj}\label{conj4}  
 For any integer $n$ and any integer $k$, 
 $$\mathsf s^{(k)}(\Z_n) = n+k.$$
\end{conj}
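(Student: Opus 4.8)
The plan is to prove $\mathsf s^{(k)}(\Z_n)=n+k$ by handling the two inequalities separately, and — for the harder, upper inequality — by inducting on $k$ with the group-theoretic work isolated in a single sumset statement. For the upper bound $\mathsf s^{(k)}(\Z_n)\le n+k$, the range $k\ge n-1$ needs nothing new: the Erd\H{o}s--Ginzburg--Ziv theorem already produces a zero-sum subsequence of length $n$ in any sequence of length $2n-1\le n+k$. So I would induct on $k$ in the range $2\le k\le n-2$, the base case $k=2$ being Theorem~\ref{X_k(m)}(1). Given a $k$-restricted $\bfx$ of length $n+k$: if no value occurs $k$ times then $\bfx$ is $(k-1)$-restricted of length $n+k\ge n+(k-1)=\mathsf s^{(k-1)}(\Z_n)$, and monotonicity finishes the step; so I may assume some value has multiplicity exactly $k$. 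Now use the complementary reformulation — a length-$(n+k)$ sequence has a zero-sum subsequence of length $n$ iff some $k$ of its terms sum to the total $S:=\bfx_{[n+k]}$ — and arrange the distinct values (multiplicities $\ell_1=k\ge\ell_2\ge\cdots$) into nested columns $C_j=\{x_i:\ell_i\ge j\}$, so that $C_1\supseteq\cdots\supseteq C_k$ are all nonempty and $\sum_j|C_j|=n+k$. Since value $x_i$ occupies exactly columns $1,\dots,\ell_i$, discarding one term from each column is a legitimate way to remove $k$ elements, and the achievable discarded sums are precisely $C_1+\cdots+C_k$. Thus it suffices to show $S\in C_1+\cdots+C_k$; this is exactly the configuration that Cauchy--Davenport settles for $n=p$ in Theorem~\ref{X_k(m)}(2).

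To attack $S\in T:=C_1+\cdots+C_k$ for composite $n$ I would invoke Kneser's theorem with $H:=\mathrm{Stab}(T)$. If $H$ is trivial, Kneser degenerates to $|T|\ge\sum_j|C_j|-(k-1)=n+1$, impossible; hence either $T=\Z_n$ (so $S\in T$, done) or $H$ is a nontrivial subgroup of order $d\ge 2$. In the latter case $T$ is a union of $H$-cosets, so the images $\overline{C_j}$ in $\Z_n/H$ have a sumset of trivial stabilizer; the plan is to apply Kneser in $\Z_n/H$ to place $\overline S$ inside $\overline{C_1}+\cdots+\overline{C_k}$, discard one element per column so that the discarded sum is congruent to $S$ modulo $H$, and then exploit the full-multiplicity values forced to lie inside a common $H$-coset to swap which copies are discarded and absorb the residual error, which lives in $H$, landing the discarded sum on $S$ exactly. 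The snag is quantitative: when every $C_j$ is (nearly) a union of cosets the quotient Kneser bound reads $|\overline T|\ge (n+k)/d-(k-1)$, which falls short of $|\Z_n/H|=n/d$ as soon as $k\ge 3$ and $d\ge 2$, so the one-per-column mechanism alone does not force $\overline S\in\overline T$.

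For the lower bound $\mathsf s^{(k)}(\Z_n)\ge n+k$ I would build a $k$-restricted sequence of length $n+k-1$ with no zero-sum subsequence of length $n$; complementarily it suffices that no $k-1$ of its terms sum to the total $S$, and after multiplying by a unit this is a multiset of integer representatives in $[0,n-1]$, each value used at most $k$ times, no $n$ of which sum to a multiple of $n$. For $k\ge n-1$ the Erd\H{o}s--Ginzburg--Ziv extremal sequence $0^{n-1}1^{n-1}$ and its padded variants work, and for $n$ large relative to $k$ the explicit construction in the proof of Theorem~\ref{X_k(m)}(2) — a block of the small values $0,1,\dots$ taken at full multiplicity followed by a short arithmetic tail pinned by the total-sum constraint — transfers without change, since it never used primality; one arranges the values so that every $n$-term sum is trapped strictly between two consecutive multiples of $n$.

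The main obstacle is the nontrivial-stabilizer case of the upper bound, and the failure is genuine rather than a defect of the method: taking each $C_j$ equal to a fixed proper subgroup gives $C_1+\cdots+C_k$ a proper subgroup, so no one-element-per-column (equivalently, pure sumset) argument can succeed, and one must instead use the repeated entries inside $H$-cosets. I expect closing the quantitative gap above to require either a stability refinement of Kneser's theorem tailored to nested families $C_1\supseteq\cdots\supseteq C_k$, or the Savchev--Chen inverse theorem classifying length-$(n+k)$ sequences with no zero-sum subsequence of length $n$ as (unit multiples of) short integer sequences, against which the $k$-restriction can be tested directly. A secondary gap is the lower-bound construction in the intermediate range $k+2\le n=O(k^2)$, where neither the Erd\H{o}s--Ginzburg--Ziv example nor the paper's construction (valid only for $n=\Omega(k^2)$) applies as stated.
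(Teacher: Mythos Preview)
This statement is labeled a \emph{conjecture} in the paper, not a theorem: the authors explicitly write ``we believe the following conjecture holds'' and offer no proof. So there is no paper proof to compare your proposal against; the paper only establishes the special cases $k=2$ (all $n$) and $n=p$ prime with $p$ large (Theorem~\ref{X_k(m)}, parts 1 and 2), and remarks that Kneser's theorem might replace Cauchy--Davenport but that ``it is not clear how to improve our proof on to the general case.''

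Your outline is a sensible attack, and you have correctly located the genuine obstruction. The upper bound argument via the column decomposition $C_1\supseteq\cdots\supseteq C_k$ and the complementary reformulation is exactly the mechanism the paper uses for $n$ prime; your Kneser step is the natural generalization, and your own diagnosis is accurate: when the stabilizer $H$ of $T=C_1+\cdots+C_k$ is nontrivial, the quotient bound $|\overline{T}|\ge (n+k)/d-(k-1)$ is too weak for $k\ge 3$, and the pure sumset approach provably cannot succeed (your example $C_j=H$ for all $j$ shows this). The ``swap copies inside an $H$-coset to absorb the residual error'' idea is promising but, as you note, is not yet a proof. Likewise, the lower bound construction from Theorem~\ref{X_k(m)}(2) does use primality only incidentally, but the paper's version needs $p\ge p_0(k)=\Theta(k^2)$, so the intermediate range you flag is a real gap.

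In short: you have not proved the conjecture, but neither has the paper; your proposal is an honest research outline that identifies the same bottleneck the authors do.
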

\item Our proofs of Theorems \ref{RandErd} and \ref{RandDav} also give a bound on the error probability of their corresponding statements. A more interesting version of the same problem is the following  quantitative avatar: Suppose $\ep>0$, and suppose $A\subset\Z_n\setminus\{0\}$, and let $\mfX_m:=(X_1,\ldots,X_m)$ be a random $\Z_n$-sequence. Determine the maximum and minimum $m:=m_A(n,\ep), M:=M_A(n,\ep)$ respectively such that 
\begin{eqnarray*} \bP(\mfX_m\textrm{ is not a weighted Davenport Z-sequence for }A)&\ge& 1-\ep\\ \bP(\mfX_M\textrm{ is a weighted Davenport Z-sequence for }A)&\ge& 1-\ep\end{eqnarray*} hold. For instance, our proof of the first part of Theorem \ref{RandDav} establishes that for the set $A=\{1\}$ we have the bounds $m_A(n,\ep)\le \log_2 (\ep n)$ and $M_A(n,\ep)\ge\log_2(\frac{n}{\ep})$.  It would be interesting to see how much of an improvement is possible on these results and that would probably need more sophisticated probabilistic techniques. 
\item The proof of the last part of Theorem \ref{RandDav} establishes that sufficiently long sequences are already Davenport Z-sequences {\it whp}. We also remarked after the proof of the theorem that the same is not required; if the smallest prime factor on $n$ is sufficiently large, then bounded length sequences are already Davenport Z-sequences with high probability.  But whether $m=\omega(n)$ is {\it necessary in some cases} is not quite apparent.
\item The authors had previously proposed an extremal problem relating to the weighted Davenport constant of a group in \cite{BM1} which goes as follows. Given a finite abelian group $G$, and an integer $k\ge 2$,  we define $f^{(D)}_G(k)$ to be the least integer $\ell$ such that there is some subset $A\subset [1,\exp(G)-1]$ of size $\ell$ such that $D_A(G)\le k$. The most interesting case is for the group $G=\Z_p$ where $p$ is a prime and in this case, it turns out that \cite{BM2} in this case $$p^{1/k}<f^{(D)}_G(k)<4^{k^2}p^{1/k}.$$ As one of the principal motifs of this paper is that random $\Z_n$-sequences of much smaller length than $D_A(G)$ are sufficient to get weighted Davenport Z-sequences, it is natural to pose the following problem: Suppose $\ep>0$ and let $p$ be a prime, and let $k\ge 2$ be a fixed positive integer. Suppose $\mfX_k=(X_1,\ldots,X_k)$ is a random $\Z_p$-sequence. Let 
$$\A_{\ep}:=\{A\subset [1,p-1]: \bP(\mfX_k\textrm{ is an }A\textrm{-weighted Davenport Z-sequence })\ge 1-\ep\}.$$
Determine $$f^{(D)}_{\textrm{Rand}}(p,k,\ep):=\min_{A\in\A_{\ep}} |A|.$$
By the result in \cite{BM2} it follows that $f^{(D)}_{\textrm{Rand}}(p,k,\ep)<4^{k^2}p^{1/k}$. A preliminary guess based on the results here suggests that it is likely that $f^{(D)}_{\textrm{Rand}}(p,k,\ep)=o(p^{1/k})$.
\item Finally, two new problems suggest themselves rather naturally as a consequence of the result in the second part of Theorem \ref{RandDav}. Let $\ep>0$ and recall the notions of $M_A(n,\ep)$ and $\A_{\ep}$  as defined above.
\begin{enumerate}
\item[(a)] Determine $$\alpha(n,\ep):=\max\left\{\frac{M_A(n,\ep)}{D_A(\Z_n)}: A\in\A_{\ep}\right\}.$$  
The second part of Theorem \ref{RandDav} tells us that $\alpha(\ep, n)\ge\log_3 2-o(1)$. It would be interesting to see if this can be improved.
\item[(b)] Suppose $\ep>0$ and let $K\ge 1$ be a fixed parameter, and let $\mfX_m$ be a random $\Z_n$-sequence. Determine 
$$s(n,\ep, K):= \max\left\{|A|: A\in\A_{\ep}, M_A(n,\ep)\ge \frac{\alpha}{K}D_A(\Z_n)\right\}.$$ Again, the second part of Theorem \ref{RandDav} tells us that $s(n,\ep, K)\ge 2$ for all $K\ge 1$ and all $\ep>0$. Can it get significantly bigger? That is not clear at the moment.
\end{enumerate}
\end{itemize}

\section*{Acknowledgments} The authors are grateful to the anonymous referees for their careful reading and for pointing out discrepancies and errors that were there in the original manuscript and also for pointing out to some inaccuracies in the proofs which led us to rewrite significant portions of the paper, which has improved the quality of the paper overall.

 \section{Appendix I: Szemer\'edi's proof of the Erd\H{o}s-Eggleston Conjecture}
 
In this section, we present a proof of the Erd\H{o}s-Eggleston conjecture that was settled by Szemer\'edi. In fact, Szemer\'edi's proof works for any abelian group.  
Our presentation of the proof alone is ours, as these ideas are 
all there in Szemer\'edi's paper. We make no claim regarding the 
optimality of the constant that appears here,  nor do we make 
any attempts to optimize. We shall also drop ceilings and floors to 
make the presentation simpler. Recall that for a set $A\subset G$, by $\Sym(A)$ we mean the set of all sums $\sum_{x\in X} x$ as $X$ varies over all non-empty subsets of $A$.
\begin{thm} {(\bf Szemer\'edi}) Let $G$ be a finite abelian group and suppose $A\subset G$ such that $0\not\in\Sym(A)$. Then $|\Sym(A)|\ge\frac{|A|^2}{10000}$.\end{thm}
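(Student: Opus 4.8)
The plan is to track how the set of subset sums grows as the elements of $A=\{a_1,\dots,a_n\}$ are added one at a time. Write $S_i=\{0,a_1\}+\cdots+\{0,a_i\}$, so that $S_0=\{0\}$ and $S_n=\Sym(A)\cup\{0\}$; the hypothesis $0\notin\Sym(A)$ says precisely that $0$ has a unique (empty) representation as a subset sum, and hence that $0$ is uniquely represented in each partial sumset $S_i=S_{i-1}+\{0,a_i\}$. First I would record the soft structural inputs this buys. The stabilizers $H(S_i)=\{g:g+S_i=S_i\}$ form an increasing chain $H(S_0)\subseteq\cdots\subseteq H(S_n)$, since a period of $S_{i-1}$ is automatically a period of $S_{i-1}+\{0,a_i\}$. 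A step fails to enlarge the set only when $a_i\in H(S_{i-1})$, and by Kneser's theorem, whenever $a_i\notin H(S_i)$ one has $|S_i|\ge|S_{i-1}|+|H(S_i)|$, so such a step grows the set by at least the current period. Since every unproductive element lies in $H(S_n)$ and these elements are distinct and nonzero, at most $|H(S_n)|-1$ of them can be unproductive.

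This already controls the \emph{structured} regime: a large final period $H(S_n)$ forces $|S_n|\ge|H(S_n)|$ outright (as $H(S_n)\subseteq S_n$), while a steadily growing period makes the accumulated Kneser increments large. One can also get a clean, case-free linear estimate by splitting $A$ into blocks and applying Scherk's theorem (Theorem~\ref{Shrk}) repeatedly: its unique-representation hypothesis holds at every stage precisely because a nontrivial relation among block-prefix sums would be a nonempty zero subset sum, and this yields $|\Sym(A)|\ge\min(|G|,n)$. The real difficulty, and the whole point of the theorem, is that all of these Cauchy--Davenport/Kneser/Scherk inputs are ``$|X+Y|\ge|X|+|Y|-1$'' inequalities and therefore cannot by themselves break the \emph{linear} barrier $|\Sym(A)|\gtrsim n$; they are blind to the quadratic truth.

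To reach $|\Sym(A)|\ge|A|^2/10000$ I would isolate the genuinely hard case --- trivial or small period, where the subset sums behave ``freely'' --- and argue there by an extremal count on the sums of a \emph{fixed} size. Let $\Sigma_k$ be the set of sums of $k$-element subsets of $A$; since $\Sym(A)\supseteq\Sigma_k$ for every $k$, it suffices to find one $k$ (morally $k\approx n/2$) with $|\Sigma_k|\gtrsim n^2$. The model statement is the classical fact that $n$ distinct \emph{reals} have at least $k(n-k)+1$ distinct $k$-subset sums; the plan is to transport a version of this to $G$ by choosing a homomorphism (or character) that spreads out a constant fraction of $A$, using $0\notin\Sym(A)$ to ensure no such spreading collapses, and then running an adjacent-swap argument to count middle-layer sums. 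Interpolating between this free-regime bound and the period-based bound of the structured regime --- and bookkeeping constants crudely --- should then deliver the stated $|A|^2/10000$.

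\textbf{Main obstacle.} The crux is exactly the passage past the linear barrier in the free regime: every purely additive tool available (Kneser, Scherk, Cauchy--Davenport) tops out at a bound linear in $|A|$, so the quadratic gain must come from the finer combinatorics of \emph{how many} distinct fixed-size subset sums occur. The main technical work is importing the real-number extremal estimate into an arbitrary finite abelian group that carries no order, and here the hypothesis $0\notin\Sym(A)$ must be used essentially and repeatedly rather than as a one-line non-degeneracy remark --- this is where Szemer\'edi's actual idea lies.
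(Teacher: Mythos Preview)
Your proposal is not a proof but a diagnosis followed by a wish. You correctly observe that Kneser/Scherk/Cauchy--Davenport are linear tools and cannot cross the quadratic barrier, and you correctly locate the crux in bounding $|\Sigma_k|$ for some middle $k$. But the step you label ``transport the real-number bound via a character/homomorphism and an adjacent-swap argument'' is the entire theorem, and you have not carried it out. In a finite abelian group there is no nontrivial homomorphism to $\mathbb{Z}$, and characters land in $\mathbb{C}^*$ with no order structure; ``spreading out a constant fraction of $A$'' so that the real extremal count $k(n-k)+1$ survives is not something one can simply assert, and the hypothesis $0\notin\Sym(A)$ does not obviously furnish such a spreading. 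Your final sentence essentially concedes this: you say ``this is where Szemer\'edi's actual idea lies'' without supplying that idea.

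For contrast, the paper's argument uses none of Kneser, Scherk, or characters. It is a contradiction argument on the subset lattice: assuming $|\Sym(A)|<\ell^2/10000$ with $\ell=|A|$, one builds, for each level $k$ in the middle range $[\ell/4,3\ell/4]$, a bipartite graph $G_k$ between $\binom{A}{k}$ and $\binom{A}{k+1}$ in which $X\sim Y$ means $X\subset Y$ and $|\Sym(Y)\setminus\Sym(X)|\le\ell/100$. The smallness assumption forces every maximal chain to miss at most $\ell/100$ edges, so an averaging over chains shows two consecutive levels $G_{k-1},G_k$ both have edge-density at least $2/3$; this in turn produces a single set $D\in\binom{A}{k}$ with many neighbors both above and below. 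Writing these neighbors as $D\cup\{b_j\}$ and $D\setminus\{a_i\}$, the containments $\{\sum_D x - a_i + b_j\}\subset\Sym(D\cup\{b_j\})$ combined with the small-difference condition force one of these sums to lie in $\Sym(D\setminus\{a_i\})$, which rearranges to a nonempty zero subset sum --- the contradiction. The quadratic gain thus comes from a density argument on the Boolean lattice, not from any sumset inequality, and the hypothesis $0\notin\Sym(A)$ is invoked exactly once, at the final rearrangement.
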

\begin{proof} Suppose the statement of the theorem does not hold; in particular suppose $A\subset G$ satisfies $0\not\in \Sym(A)$ and $|\Sym(A)|<\frac{|A|^2}{10000}$. 
We may assume without loss of generality that $|A|$ is sufficiently large (how large $|A|$ could be assumed to be will be determined shortly by the inequalities that shall appear). Let us write $|A|=\ell$. For each $\ell/4\le k\le 3\ell/4$ we define the bipartite graph $G_k$ with vertex sets $\binom{A}{k}, \binom{A}{k+1}$ (For a set $A$, $\binom{A}{r}$ denotes the set of all $r$-subsets of $A$) as follows.  $X\in\binom{A}{k}$ is adjacent to $Y\in \binom{A}{k+1}$ in $G_k$ if and only if  $X\subset Y$ and $|\Sym(Y)\setminus\Sym(X)|\le \ell/100$. The upshot of this definition for the graphs $G_k$ is this: If we consider the union of all the graphs $\G=\cup_k G_k$, and consider any chain $X_0\subset\cdots \subset X_{\ell/2}$ of sets with $|X_i|=\ell/4+i$ (for $0\le i\le \ell/2$), then there are at most $\ell/100$ `missing edges' along the chain in the graph $\G$. This follows since if there are more than $\ell/100$ missing edges along some chain $(X_1,\ldots,X_{\ell/2})$ then $|\cup_i \Sym(X_{i+1})\setminus \Sym(X_i)|> (\ell/100)(\ell/100)$ and that contradicts the assumption that the statement is false.

Fix integers $t, k$ and consider $D\in\binom{A}{k}$. Denote by $\textrm{deg}_k(D)$ the degree of $D$ in $G_k$. Suppose $\textrm{deg}_k(D), \textrm{deg}_{k-1}(D)\ge t$. Let $D$ have neighbors $B_1,\ldots, B_t$ and $A_1,\ldots, A_t$ in $G_k$ and $G_{k-1}$ respectively. Write  $A_i=D\setminus\{a_i\}$, and $B_i=D\cup\{b_i\}$. Since 
$$S:=\left\{\sum_{x\in D} x -a_i+b_j:i,j\in[t]\right\}\supset\Sym(B_j)$$ 
and $|\Sym(B_j)\setminus\Sym(D)|\le\ell/100$, there are at least $t-\ell/100$ elements in $S\cap \Sym(D)$.
Since this holds for each $j$, a simple averaging argument implies that there exists $i$ such that $t-\ell/100$ elements of the set $U:=\{\sum_{x\in D} x-a_i+b_j:1\le j\le t\}$ belong to $\Sym(D)$, and since $|\Sym(D)\setminus\Sym(A_i)|\le\ell/100$, it follows again that at least $|U\cap \Sym(A_i)|\ge t-\ell/50$. In particular,  if $t>\ell/50$, there is some $1\le j_0\le t$ and $D_1\subset D\setminus\{a_i\}$ such that
$$\sum_{x\in D} x - a_i+b_{j_0}=\sum_{y\in D_1} y, \text{ \  which implies that } b_{j_0}+\sum_{x\in D\setminus (D_1\cup\{a_i\})} x=0$$ contradicting the hypothesis that $0\not\in\Sym(A)$.

So, to complete the proof, we need to show that there exists $k\in [\ell/4+1,3\ell/4]$ and $D\in\binom{A}{k}$ such that $D$ has degree more than $\ell/50$ in both $G_k$ and $G_{k-1}$. Towards that end, let us denote by $d(G_k)$ the \textit{density} of  $G_k$, i.e., $d(G_k)=\frac{e(G_k)}{m_k}$ where $e(G_k)$ denotes the number of edges in $G_k$ and $m_k$ denotes the cardinality of the set $\{(X,Y):X\in\binom{A}{k}, Y\in\binom{A}{k+1}, X\subset Y\}$. 

We now claim the following: If there exists $k$ in this range such that both $d(G_k),d(G_{k-1})\ge 2/3$, then there is a vertex $D\in\binom{A}{k}$ such that $\textrm{deg}_k(D),\textrm{deg}_{k-1}(D)>\ell/50$. By the preceding observation, the proof of the theorem follows as a consequence. To prove the claim, suppose $\textrm{BAD}_k:=\{D\in\binom{A}{k}: \textrm{deg}_k(D)\le\ell/48\}$. Then 
$$ \frac{2}{3}\binom{\ell}{k}(\ell-k)=\frac{2m_k}{3}\le e(G_k)\le |\textrm{BAD}_k|(\ell/48)+\left(\binom{\ell}{k}-|\textrm{BAD}_k|\right)(\ell-k)$$
which gives $|\textrm{BAD}_k|\le\frac{18}{44}\binom{\ell}{k}<\frac{1}{2}\binom{\ell}{k}$. Similarly, we get $|\textrm{BAD}_{k-1}|<\frac{1}{2}\binom{\ell}{k}$, so there exists $D\in\binom{A}{k}\setminus(\textrm{BAD}_k\cup \textrm{BAD}_{k-1})$ and for this $D$, we have $\textrm{deg}_k(D)\ge \ell/48, \textrm{deg}_{k-1}(D)\ge\ell/48$, and that achieves our goal.

 So finally, to establish that for some $k$ we have $d(G_k),d(G_{k-1})\ge 2/3$, 
 we revert to our  observation at the very beginning of the proof: Suppose $X_i\in\binom{A}{\ell/4+i}$ for $0\le i\le\ell/2$ such that $X_i\subset X_{i+1}$ for each $i$, and let $\C$ denote the chain of subsets $\C=(X_0,\ldots,X_{\ell/2})$ with $X_i\subset X_{i+1}$ for each $0\le i\le \ell/2-1$. Then among the pairs $(X_i,X_{i+1})$ at most $\ell/100$ are non-edges in $\G$. Equivalently, for every chain $\C=(X_0,\ldots,X_{\ell/2})$, if we uniformly and randomly pick $i$ with $0\le i\le\ell/2-1$, then the probability that $(X_i,X_{i+1})$ is an edge of $\G$ is at least $0.99$.
 
 Call $k$ a {\it Bad level}, if $d(G_k)<2/3$. If what we seek does not hold, then every alternate level is Bad. In particular, if $\C=(X_0,\ldots,X_{\ell/2})$ is a chain chosen uniformly and randomly and $\textrm{Miss}(\C)$ denotes the number of missing edges along the sets in $\C$, then
 $$\ell/100\ge\bE(\textrm{Miss}(\C))\ge\sum_{k\textrm{\ Bad}} \bP((X_i,X_{i+1})\textrm{\ is\ not\ an\ edge})\ge \frac{1}{2}\cdot\frac{\ell}{2} $$
(the first inequality follows by assumption on $\textrm{Miss}(\C)$ for all chains $\C$, and the last inequality follows since there are at most $\ell/2$ levels, and at least half of those are Bad by assumption)
and that is a contradiction.\end{proof}
\section{Appendix II: Proof of Claim \ref{Uncorr}}
Suppose $\{I,J\}\ne \{I',J'\}$ satisfy $I\cap J=I'\cap J'=\emptyset$. As in the previous case of Theorem \ref{RandDav} we compute $\bP(\mfX_I=\mfX_J\textrm{ and } \mfX_{I'}=\mfX_{J'})$, and use Lemma \ref{pwi}. Towards that end, we shall consider the system of equations (\ref{uniquesoln}) for a specific choice for the special indices $i_0,j_0$ and show that except in the exceptional cases mentioned in Claim \ref{Uncorr}, the system of equations admit unique solutions for $(X_{i_0}, X_{j_0})$ which implies that $\bP(\mfX_I=\mfX_J\textrm{ and } \mfX_{I'}=\mfX_{J'})=1/n^2$ which in turn implies that $\Cov(\mathbb{I}(\mfX_{I,J}),\mathbb{I}(\mfX_{I',J'}))=0$ Even in these exceptional cases too, we shall show that there are at most two solutions for the pair $(X_{i_0}, X_{j_0})$ (like in the case of the proof of  Theorem \ref{RandErd}, part 3) which implies that in the bad cases, $\Cov(\mathbb{I}(\mfX_{I,J}),\mathbb{I}(\mfX_{I',J'}))\le 2/n^2$ as desired.

Suppose first that $I=\emptyset$. If $I'=\emptyset$ too, then both $J,J'\ne\emptyset$. If both $J\setminus J'$ and $J'\setminus J$ are nonempty, then pick $i\in J\setminus J', j\in J'\setminus J$. Let $T=I\cup J\cup I'\cup J'\setminus\{i,j\}$ and let $\bfx$ be an arbitrary $\Z_n$-sequence indexed by the elements of $T$. Consider the equations in (\ref{uniquesoln}); this reduces to two  equations of the form $X_i=\xi, X_j=\xi'$, which admits a unique solution for $(X_i,X_j)$ in $\Z_n$. If $J\subset J'$, then again, pick $i\in J$ and $j\in J'\setminus J$. This sets up equations of the form $X_i=\xi, X_i+X_j=\xi'$ which again admit a unique solution for $(X_i,X_j)$. In fact, the same argument also works if $I=I'$ as well. 

Next suppose $I=\emptyset$ but $J,I',J'\neq\emptyset$.  In this case, first suppose that $J\setminus(I'\cup J')\ne\emptyset$. In this case, pick $i\in J\setminus(I'\cup J')$ and $j\in J'$; this gives equations $X_i=\xi,X_j=\xi'$ if $j\in  J'\setminus J$ and equations $X_j=\xi, X_i+X_j=\xi'$ if $j\in J\cap J'$. In either case, this admits a unique solution for the pair $(X_i,X_j)$. If $J\subsetneq(I'\cup J')$,  pick $i\in(I'\cup J')\setminus J$ and $j\in J$ to get equations of the form $X_j=\xi,X_i\pm X_j=\xi'$ (depending on where $i$ and $j$ lie) which again leads to a unique solution for $(X_i,X_j)$. This finally leads us to the case $J=I'\cup J'$. This is the first of the exceptional cases in Claim \ref{Uncorr}.

Henceforth we shall assume  that all $I,J, I',J'\ne\emptyset$ and further that all the sets $I, I',J, J'$ are pairwise distinct.

Pick $i\in I$ and $j\in I'$. We now consider the various possibilities for the membership of elements $i,j$ in the sets $J, J'$ respectively. We shall merely write down the nature of the equations it imposes upon $X_i,X_j$. To keep our notation succinct, we shall denote the profiles by $0$-$1$ tuples $(a,b,c,d)$ which shall denote the following: $a=\bI_{i\in I'}, b=\bI_{i\in J'}, c=\bI_{j\in I}, d=\bI_{j\in J}$. So for instance, $(a,b,c,d)=(0,0,0,0)$ simply means $i\in \overline{I'\cup J'}$ and $j\in\overline{I\cup J}$, and so on. Observe that if $(a,b)$ and $(c,d)$ are interchanged, the result is merely the interchange of $i$ and $j$, so the nature of the equations is the same, so we shall club those instances into the same case. We shall merely note down the equations this forces upon $(X_i,X_j)$ (or $(X_j,X_i)$). Also note that since $I\cap J=I'\cap J'$ we cannot have $(a,b),(c,d)=(1,1)$; indeed, if $(a,b)=(1,1)$ then it implies $i\in I'\cap J'$.
\begin{enumerate}
\item $(a,b,c,d)=(0,0,0,0)$:  $X_i=\xi,X_j=\xi'$.
\item  $(a,b,c,d)=(0,0,0,1), (0,1,0,0)$: $X_i-X_j=\xi,X_j=\xi'$.
\item  $(a,b,c,d)=(0,0,1,0), (1,0,0,0)$:  $X_i+X_j=\xi,X_j=\xi'$.
\item  $(a,b,c,d)=(1,0,0,1)$: If this is the profile for every choice of $i\in I, j\in I'$ then it forces $I\subset I'\subset J$. But since $I\cap J=\emptyset$, this forces $I=\emptyset$ which has already been considered before.
\item $(a,b,c,d)=(1,0,1,0)$: In this case, this forces $I\subset I'\subset I$ which gives $I=I'$ and again, this case has already been dealt with earlier.
\item  $(a,b,c,d)=(0,1,0,1)$. In this final profile, the aforementioned argument then gives us that $I\subset J'$ and $I'\subset J$. If $J'\setminus (I\cup J)\ne\emptyset$, pick $i\in I, j\in J'\setminus(I\cup J)$ to get equations of the form $X_i=\xi, X_i+X_j=\xi'$ which admit unique solutions. Hence we may assume that $I\subset J'\subset I\cup J$. Here, pick $i\in I, j\in J'\setminus I$ and condition on all the other $X_{\ell}$. This gives equations $X_i-X_j=\xi, X_i+X_j=\xi'$ and in $\Z_n$, this system of equations admits a unique solution for $(X_i,X_j)$ if $n$ is odd, or admits at most $2$ solutions if $n$ is even. This completes the case-by-case analysis and the proof. 
\end{enumerate}

\end{document}